\newtheorem{thm}{Theorem}[section]
\newtheorem{cor}[thm]{Corollary}
\newtheorem{prop}[thm]{Proposition}
\newtheorem{lem}[thm]{Lemma}
\theoremstyle{definition}
\newtheorem{defn}[thm]{Definition}
\theoremstyle{remark}
\newtheorem{rem}[thm]{Remark}
\let\c@equation\c@thm
\numberwithin{equation}{section}
\newcommand{\R}{\mathbb{R}}
\newcommand{\Z}{\mathbb{Z}}
\newcommand{\C}{\mathbb{C}}
\newcommand{\HH}{\mathbb{H}}
\renewcommand\Im{\operatorname{Im}}
\newcommand{\SO}{\text{SO}}
\newcommand{\SL}{\text{SL}}
\newcommand{\aand}{\text{ and }}
\title{A New Modular Characterization of the Hyperbolic Plane}
\author{Mark Greenfield and Lizhen Ji}
\date{}
\begin{document}

\begin{abstract}
We develop a natural and geometric way to realize the hyperbolic plane as the moduli space of marked genus 1 Riemann surfaces. To do so, a metric is defined on the Teichm\"uller space of the torus, inspired by Thurston's Lipschitz metric defined in \cite{thurston} for the case of hyperbolic surfaces. Based on extremal Lipschitz maps, the Teichm\"uller space of the torus with this new metric is shown to be isometric to the hyperbolic plane under the usual identification. This also gives a new way to recover the complex-analytic Teichm\"uller metric via metric geometry on the underlying surfaces. Along the way, we prove a few results about this metric analogous to Thurston's Lipschitz metric in the case of hyperbolic surfaces, and analogous to the Teichm\"uller metric. 
\end{abstract}

\maketitle

\tableofcontents

\section{Introduction}
\label{intro}

\noindent Hyperbolic space arises naturally in many contexts. In particular, there are many ways to obtain structures isometric to the hyperbolic plane $\HH^2$. Here, we will develop a new metric on the Teichm\"uller space of the 2-torus $\mathbb{T}^2$ and show that it results in a metric space isometric to the hyperbolic plane. This is also known to be isometric to the Teichm\"uller metric. 

Recall the Teichm\"uller space of a closed oriented surface $S_g$ of genus $g$, denoted by $\mathcal{T}_g$, is the moduli space of equivalence classes of marked complex structures on the surface. As we will review, by the uniformization theorem each such marked complex structure possesses a canonical Riemannian metric, flat in the case of $g=1$ and hyperbolic in the case of $g\geq2$. By the Gauss-Bonnet theorem, the area of any closed hyperbolic surface $S$ is $-2\pi\chi(S)$. However, since flat metrics can take on any volume, in order to obtain a canonical flat metric for a given complex structure, one must specify some form of normalization in addition to constant curvature. We will discuss these definitions in greater detail in Section \ref{background}. 

Several different kinds of metrics have been defined for $\mathcal{T}_g$, including the classical Teichm\"uller metric $d_{Teich}$, based on extremal quasiconformal distortion between two marked complex structures on the surface $S_g$. This metric is the most natural based on the classical definition of $\mathcal{T}_g$ in terms of complex structures on surfaces. It is a remarkable classical result which we describe in Section \ref{background} that the Teichm\"uller space $\mathcal{T}_1$ with the Teichm\"uller metric is isometric to the hyperbolic plane up to a scalar multiple.

The next most well-known metric on $\mathcal{T}_g$ is the Weil-Petersson metric, introduced by Weil in 1958 \cite{weil}. This metric is based on the Petersson inner product defined on modular forms on the upper half-plane $\HH^2$. We will describe this in slightly more detail in Section \ref{background}. Ahlfors showed in 1961 in \cite{ahlfors1} and \cite{ahlfors2} that the Weil-Petersson metric gives a K\"ahler structure on $\mathcal{T}_g$ with negative Ricci, scalar, and holomorphic sectional curvatures. 

In \S7.3.5 of \cite{imayoshi}, a metric is defined on $\mathcal{T}_1$, the Teichm\"uller space of the torus, which is the appropriate genus 1 analog of the Weil-Petersson metric. There, in order to integrate the modular forms on the torus (for the Petersson inner product), a normalization to tori of area 1 is utilized. The resulting metric on $\mathcal{T}_1$ turns out to be isometric to the hyperbolic metric on the upper half plane, up to a scalar multiple. We will exhibit a very similar result using a metric defined very differently. 

In \cite{thurston}, Thurston defined an asymmetric metric $\lambda$ on $\mathcal{T}_g$ for the Teichm\"uller spaces of hyperbolic surfaces with $g\geq2$ using the extremal Lipschitz constant between canonical Riemannian metrics. Let $[S,f]$ and $[S',f']$ be two marked Riemann surfaces in $\mathcal{T}_g$. Let $(S_g,h)$ and $(S_g,h')$ denote the Riemannian manifolds with the associated flat metrics for $[S,f]$ and $[S',f']$ respectively. Recall that for a homeomorphism $\varphi:(S_g,h)\to(S_g,h')$, the Lipschitz constant $\mathcal{L}(\varphi)$ is defined as:
\begin{equation}
\label{lipschitz}
\mathcal{L}(\varphi) = \sup_{x\neq y}\bigg(\frac{d_{h'}(\varphi(x),\varphi(y))}{d_h(x,y)}\bigg).
\end{equation}
When the supremum is finite, $\varphi$ is a Lipschitz map. Then \emph{Thurston's Lipschitz metric} $\lambda(h,h')$ is given by:
\begin{equation}
\label{thurstonlambda}
\lambda(h,h') = \inf_{\varphi}\log(\mathcal{L}(\varphi)),
\end{equation}
where the infimum is over all Lipschitz maps homotopic to $f'^{-1}\circ f$ (i.e. respecting the markings). 

This metric is defined based on the smallest Lipschitz constant possible when distorting one hyperbolic metric into another. One can easily show the triangle inequality, but showing that $\lambda$ separates points is nontrivial. See \cite[Proposition 2.1]{thurston} for a proof. However, symmetry actually fails for hyperbolic surfaces - a counterexample is given in \S2 of \cite{thurston}. Thurston remarked that while one can consider the symmetrization, given by 
$$
S\lambda(h,h') = \frac{1}{2}\big(\lambda(h,h')+\lambda(h',h)\big)
$$
to obtain a symmetric metric, the asymmetric version is useful because of its direct geometric interpretation as quantifying the maximum stretch in each direction. This metric is natural on Teichm\"uller spaces of hyperbolic surfaces using only the canonical Riemannian metric associated to each marked complex structure. It is similar to the Teichm\"uller metric in that both seek extremal values for what amounts to a global stretch factor. 

Another metric defined by Thurston in \cite{thurston} for Teichm\"uller spaces of hyperbolic surfaces is based on ratios of lengths of curves. Recall that a closed curve $c$ on a surface is said to be essential if it is not homotopic to a point, a puncture, or a boundary component. Let $\mathcal{S}(S_g)$ denote the collection of essential curves on the surface $S_g$, and given a curve $\alpha\in\mathcal{S}(S_g)$, denote by $\ell_h(\alpha)$ the length of the geodesic representive for $\alpha$ with the metric $h$. Denoting by $h$ and $h'$ the two hyperbolic metrics obtained from points of Teichm\"uller space, the metric $\kappa$ on $\mathcal{T}_g$ is defined by:
\begin{equation}
\label{thurstonkappa}
\kappa(h,h') = \log\sup_{\alpha\in\mathcal{S}(S_g)}\frac{\ell_h(\alpha)}{\ell_{h'}(\alpha)}.
\end{equation}

In this paper, we define the appropriate analog of Thurston's Lipschitz metric for the case of $g=1$, and use this to develop a new modular interpretation of $\HH^2$. Along the way we also consider $\kappa$ in this setting and prove that $\kappa=\lambda$, just as in \cite{thurston}. Unique to the case of the torus is the fact that constant curvature does not uniquely determine a metric: one may perform a scaling of a flat torus to change the area. We will choose a normalization to area 1 in order to obtain a canonical Riemannian metric on each marked torus. This is one of the major goals of Section \ref{background}. Without a choice of normalization, one could find conformally equivalent flat metrics of any area.

We can extend the definition of $\lambda$ given above to the case of $\mathcal{T}_1$, thereby obtaining the analog of Thurston's metric for hyperbolic Riemann surfaces. We will prove the following result: 

\begin{thm}
\label{main}
The function $\lambda$ in Equation \ref{thurstonlambda} defines a (symmetric) metric on $\mathcal{T}_1$, the Teichm\"uller space of marked flat tori of volume 1. Further, the metric spaces $(\mathcal{T}_1,\lambda)$, $(\mathcal{T}_1,d_{\text{Teich}})$, and $(\HH^2,h)$ are isometric (up to a scalar multiple), where $h$ is the Poincar\'e metric on the hyperbolic plane and $d_{\text{Teich}}$ is the Teichm\"uller metric. 
\end{thm}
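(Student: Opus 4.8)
The plan is to reduce all three distances to a single invariant — the largest singular value of a canonical linear map — and then match that invariant with the hyperbolic distance by a direct computation. Identify a point of $\mathcal{T}_1$ with a marked lattice $\Lambda\subset\C\cong\R^2$ of covolume $1$, the associated flat torus being $\C/\Lambda$ with the Euclidean metric $h$; the marking singles out a basis of $\pi_1\cong\Lambda$. Given two marked tori $(\C/\Lambda,h)$ and $(\C/\Lambda',h')$, let $A\in\GL_2^+(\R)$ be the unique orientation-preserving $\R$-linear map with $A\Lambda=\Lambda'$ compatible with the markings. Because both lattices have covolume $1$, $\det A=1$, so the singular values of $A$ are $\sigma\ge 1\ge\sigma^{-1}$; write $\|A\|=\sigma$ for the operator norm.

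First I would show $\lambda(h,h')=\log\sigma$, i.e.\ that the affine map $\bar A$ induced by $A$ is extremal among marking-respecting Lipschitz maps. That $\mathcal L(\bar A)=\sigma$ is immediate from $d_{h'}(\bar Ax,\bar Ay)\le|A(x-y-\gamma)|\le\sigma\,d_h(x,y)$ for a suitable $\gamma\in\Lambda$, with equality for nearby points along the maximal-stretch direction. For the lower bound, let $\varphi$ be any Lipschitz map respecting the markings and $\gamma\in\Lambda$ primitive: the simple closed geodesic $\alpha_\gamma$ has $h$-length $|\gamma|$ and $\varphi(\alpha_\gamma)$ is freely homotopic to the class of $A\gamma$, so its $h'$-length is at least $|A\gamma|$ and at most $\mathcal L(\varphi)\,|\gamma|$; hence $\mathcal L(\varphi)\ge|A\gamma|/|\gamma|$, and as the set of primitive lattice directions is dense in $S^1$ and $v\mapsto|Av|/|v|$ is continuous, $\mathcal L(\varphi)\ge\sigma$. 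The identical estimate gives $\kappa(h,h')=-\log(\sigma^{-1})=\log\sigma$ (using $\det A=1$), so $\kappa=\lambda$ on $\mathcal T_1$, the genus-one analogue of the result in \cite{thurston}. The metric axioms now follow quickly: $\sigma\ge1$ with equality iff $A\in\SO(2)$, i.e.\ iff the two marked tori agree in $\mathcal T_1$, so $\lambda$ is nonnegative and separates points; the triangle inequality comes from $\sigma(BA)\le\sigma(B)\sigma(A)$ under composition; and symmetry holds because the singular values of $A^{-1}$ are $\sigma^{-1}\le\sigma$, so $\lambda(h',h)=\log\sigma(A^{-1})=\log\sigma=\lambda(h,h')$ — this is the only place the volume normalization is genuinely needed, and is precisely why symmetry fails in the hyperbolic setting.

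To identify $(\mathcal T_1,\lambda)$ with $(\HH^2,h)$ up to a scalar, I would use the standard chart $\tau\mapsto\frac{1}{\sqrt{\Im\tau}}(\Z\oplus\Z\tau)$ with its evident marking, write $A$ for a pair $\tau,\tau'$ explicitly as a $2\times2$ matrix, and combine $\det A=1$ with $\sigma^2+\sigma^{-2}=\operatorname{tr}(A^{\mathsf T}A)$ to obtain
$$\cosh\!\big(2\lambda(h,h')\big)=\tfrac12\operatorname{tr}(A^{\mathsf T}A)=1+\frac{|\tau-\tau'|^2}{2\,\Im\tau\,\Im\tau'}=\cosh d_{\HH}(\tau,\tau'),$$
so $\lambda=\tfrac12 d_{\HH}$. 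For the Teichm\"uller metric, the extremal quasiconformal map between two marked flat tori is this same affine map, with dilatation $K=\sigma/\sigma^{-1}=\sigma^2$, so $d_{\text{Teich}}=\tfrac12\log K=\log\sigma=\lambda$; alternatively one may invoke the classical identification of $(\mathcal T_1,d_{\text{Teich}})$ with a rescaled $\HH^2$ recalled in Section \ref{background} and compare constants.

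I expect the main obstacle to be the extremality step — proving $\mathcal L(\varphi)\ge\sigma$ for every marking-respecting Lipschitz map, not just for affine ones. The curve-length argument above does it, but care is needed to see that the bound persists when one passes from geodesic to arbitrary representatives of a loop, and that the supremum over the countable set of primitive lattice directions really recovers the operator norm of $A$; an alternative that avoids these points is to average $\varphi$ over the translation action of the torus on itself and thereby reduce directly to the affine case.
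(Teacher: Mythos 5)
Your proposal is correct, and on the two decisive steps it takes a genuinely different route from the paper. For extremality of the affine map (the paper's Proposition \ref{extremalaffine}), the paper uses a rescaling/limit argument: given any $K$-Lipschitz $g$ in the class, the maps $g_k(z)=g(kz)/k$ stay in the class, remain $K$-Lipschitz, and converge uniformly to the affine map, which therefore inherits the bound $K$. You instead prove the lower bound $\mathcal L(\varphi)\ge\sigma$ by comparing lengths of closed geodesics: $\mathcal L(\varphi)\ge |A\gamma|/|\gamma|$ for every lattice class $\gamma$, and density of lattice directions in $S^1$ recovers the operator norm. This is essentially the argument the paper deploys only later, in Proposition \ref{kappalambdasame}, to show $\kappa\ge\lambda$; your organization gets $\kappa=\lambda$ as a byproduct of the extremality proof, whereas the paper's soft limiting argument is independent of curves and transfers verbatim to higher-dimensional flat tori $\R^n/\Lambda$, which the introduction flags as a desired generalization. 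Your separation-of-points argument ($\sigma=1$ and $\det A=1$ force $A\in\SO(2)$) is cleaner than, but logically downstream of, the affine reduction; the paper's Proposition \ref{thurstonprop} gives an area-covering argument that needs no such reduction. Finally, for the identification with $\HH^2$ you compute $\cosh(2\lambda)=\tfrac12\operatorname{tr}(A^{\mathsf T}A)=\cosh d_{\HH^2}$ directly, bypassing quasiconformal maps, while the paper routes through $K_\varphi=\mathcal L(\varphi)^2$ and Teichm\"uller's theorem; both work, and note that your constant $d_{\text{Teich}}=\lambda$ is the one consistent with $K_\varphi=\mathcal L(\varphi)^2$ and $d_{\text{Teich}}=\tfrac12\log K$ (the factor $2$ in the paper's displayed conclusion is a slip, harmless for the ``up to scalar'' statement). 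The two points you flag as delicate are genuinely fine: a Lipschitz map multiplies lengths of rectifiable loops by at most $\mathcal L$, and the straight geodesic minimizes length in its free homotopy class on a flat torus, so the bound survives passing to arbitrary representatives.
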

Our main tool is Proposition \ref{extremalaffine}, where we show that the extremal Lipschitz distortion is realized by the unique affine map between two tori. This allows us to directly compute the value of the metric $\lambda$. Comparing to the Teichm\"uller metric, we obtain our desired result. Recall that the extremal quasiconformal map realizing the Teichm\"uller distance is unique by Teichm\"uller's uniqueness result (see Theorem 11.9 of \cite{primer}). Interestingly, this is not the case for extremal Lipschitz maps. We give a construction for an infinite family of distinct maps each with the extremal Lipschitz constant between two tori in Proposition \ref{nonuniqueaffine}.

The Teichm\"uller space is well-understood as the moduli space of marked complex structures. Using a canonical association of Riemannian metrics and the natural definition from Thurston's metric, one obtains precisely the hyperbolic plane. As we have seen, both the Teichm\"uller metric and the genus-1 Weil-Petersson metric defined on $\mathcal{T}_1$ yield spaces isometric to the hyperbolic plane. Both of these metrics rely on the complex structures of the underlying Riemann surfaces. Theorem \ref{main} continues this theme without using quasiconformal dilatation or modular forms. Instead, we obtain our new modular interpretation of the hyperbolic plane in terms of the metrics on the underlying surfaces, in contrast to previous work in this direction.

Recently, Belkhirat, Papadapolous, and Troyanov have defined an asymmetric version of Thurston's metric on $\mathcal{T}_1$ using a different notion of normalization which requires a choice of marking and measuring lengths of curves. Interestingly, the paper \cite{sorvali} by T. Sorvali in 1975 treated many of the same ideas as in \cite{bpt} with the same normalization, before even Thurston's original paper \cite{thurston} was written. In particular, the metric on $\mathcal{T}_1$ of Sorvali is equivalent to a certain symmetrization of the weak metric in \cite{bpt}. We will discuss this further in section \ref{sorvalisection}. As mentioned, our new metric is normalized instead by area. Our resulting metric has many desirable properties over previous versions, and we give more geometric motivation and proofs for our results. 

An interesting generalization to consider is the case of higher-dimensional tori $T^n$. Many of the basic results still apply to this case, despite the fact that we will no longer have some of the nicer properties of surfaces. We may still define the space of marked flat $n$-tori and normalize the canonical Riemannian metric by a choice of volume 1. The Teichm\"uller space in question generalizes from $\HH^2 \cong SO(2)\backslash SL(2,\R)$ to the symmetric space $SO(n)\backslash SL(n,\R)$.

We will review the classical definition of $\mathcal{T}_g$ and the correspondence of $\mathcal{T}_1$ with the hyperbolic upper-half plane $\HH^2$ in Section \ref{background}, and obtain a canonical flat Riemannian metric for each marked complex structure. A discussion of the existence of extremal maps will be given in Section \ref{lipschitzsection}, the main result of which is an explicit form for an extremal Lipschitz map. This will allow us to directly compute the metric $\lambda$. We will also exhibit distinct extremal Lipschitz maps in Proposition \ref{nonuniqueaffine}. In section \ref{lambdametric}, we will give a more complete description of Thurston's Lipschitz metric $\lambda$ and prove Theorem \ref{main}.  In section \ref{kappametric} we re-introduce Thurston's metric $\kappa$ in our context, and prove it is equivalent to $\lambda$. We close with a brief discussion in section \ref{sorvalisection} of a comparison of the results of Belkhirat-Papadopoulos-Troyanov in \cite{bpt} and Sorvali in \cite{sorvali}, both of which served as precursors to this work.

\subsection*{Acknowledgements} The authors wish to thank Richard Canary for several useful discussions. The first author is supported by the National Science Foundation Graduate Research Fellowship Program under Grant No. DGE\#1256260. Any opinions, findings, and conclusions or recommendations expressed in this material are those of the authors and do not necessarily reflect the views of the National Science Foundation.

\section{Teichm\"uller spaces of closed Riemann surfaces}
\label{background}

Here, we will review some background on Teichm\"uller spaces, including the canonical association of equivalence classes of constant-curvature Riemannian metrics, the correspondence between the Teichm\"uller space of the torus and the upper half-plane. We will also touch on the Teichm\"uller metric and the Weil-Petersson metric. The correspondence will give us the tools to prove a few details about the metrics $\lambda$ and $\kappa$, as well as point to some concepts which will generalize to higher dimensions. 

Let $S_g$ be a closed, oriented smooth surface of genus $g\geq1$. 

\begin{defn}
\label{classical}
The Teichm\"uller space $\mathcal{T}_g$ is defined as the set of equivalence classes of marked closed Riemann surfaces of genus $g$:
$$
\mathcal{T}_g = \{[S,f]: S \text{ a Riemann surface}, f:S \to S_g \text{ orientation-perserving homeomorphism}\}/\sim
$$
where the equivalence relation is isotopy. In other words, $[S,f]\sim[S',f']$ if and only if there exists a conformal homeomorphism $h$ such that the following diagram commutes up to homotopy:

\begin{figure}[!ht]
\centering
\begin{tikzcd}[row sep=tiny]
S \arrow[rd, "f"] \arrow[dd, "h"] & \\
 & S_g \\
S' \arrow[ur, "f'"'] &
\end{tikzcd}
\end{figure}
\end{defn}

\begin{rem} 
Notice that by forgetting the maps $f$ and $f'$, we forget the markings of $S$ and $S'$, and the condition reduces to conformal equivalence. The resulting collection defines the classical moduli space $\mathcal{M}_g$ of the surface $S_g$. More formally, one may realize the moduli space as the quotient $$\mathcal{M}_g=\text{Mod}_g\backslash\mathcal{T}_g,$$ where $\text{Mod}_g$ is the mapping class group of $S_g$. 
\end{rem}

We will start by reviewing the correspondence between marked complex structures and equivalence classes of constant-curvature metrics. We will discuss the genus 1 case for the following classical result:

\begin{prop}
\label{canonicalbij}
For each $g\geq1$, there is a canonical bijection
$$
\mathcal{T}_g\cong \text{Met}_g/\text{Diff}_0(S_g)
$$
where $\text{Met}_g$ is the collection of constant-curvature metrics on $S_g$ of area $1$ if $g=1$ or $4\pi(g-1)$ if $g>1$, and $\text{Diff}_0(S_g)$ is the collection of diffeomorphisms of $S_g$ isotopic to the identity. 
\end{prop}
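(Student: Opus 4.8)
The plan is to prove the bijection $\mathcal{T}_g\cong\text{Met}_g/\text{Diff}_0(S_g)$ by combining the uniformization theorem with a careful bookkeeping of markings and normalizations, focusing (as the excerpt announces) on the genus $1$ case. First I would set up the map in the direction $\text{Met}_g/\text{Diff}_0(S_g)\to\mathcal{T}_g$: given a constant-curvature metric $\sigma$ on $S_g$ (of the prescribed area), the conformal class of $\sigma$ determines a complex structure on $S_g$, hence a Riemann surface $S=(S_g,[\sigma])$; taking $f=\text{id}_{S_g}$ as the marking gives a point $[S,\text{id}]\in\mathcal{T}_g$. I would then check this is well-defined on $\text{Diff}_0(S_g)$-orbits: if $\sigma'=\psi^*\sigma$ for $\psi\in\text{Diff}_0(S_g)$, then $\psi$ is a conformal homeomorphism $(S_g,[\sigma'])\to(S_g,[\sigma])$ homotopic to the identity, so it realizes the equivalence $[S',\text{id}]\sim[S,\text{id}]$ in Definition \ref{classical}.

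For the reverse direction, given $[S,f]\in\mathcal{T}_g$, I would use uniformization to equip the Riemann surface $S$ with its unique constant-curvature metric of total area $1$ (when $g=1$) or $4\pi(g-1)$ (when $g>1$) compatible with the complex structure; here the area-$1$ normalization in the $g=1$ case is exactly the choice that rigidifies the otherwise scale-ambiguous flat metric, and in the $g\ge 2$ case Gauss–Bonnet forces the area, so no extra choice is needed. Pushing this metric forward along $f$ (or rather pulling back along $f^{-1}$) gives a constant-curvature metric $f_*\sigma_S$ on $S_g$ of the correct area, hence a class in $\text{Met}_g/\text{Diff}_0(S_g)$. The content is that this is independent of the representative of $[S,f]$: if $h\colon S\to S'$ is conformal with $f'\circ h\simeq f$, then $h$ is an isometry for the uniformizing metrics (uniqueness of the constant-curvature metric in a conformal class, with the fixed area normalization), and the homotopy between $f'\circ h$ and $f$, made smooth, exhibits $f_*\sigma_S$ and $f'_*\sigma_{S'}$ as differing by an element of $\text{Diff}_0(S_g)$.

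Then I would verify the two constructions are mutually inverse, which is essentially formal once both are shown well-defined: starting from a metric and going around returns the same $\text{Diff}_0$-orbit because $\text{id}_{S_g}$ is used as the marking, and starting from $[S,f]$ and going around returns an equivalent marked surface via the conformal identification $S\cong(S_g,[f_*\sigma_S])$ induced by $f$. I expect the main obstacle to be the two uniqueness/regularity inputs: (i) that within a fixed conformal class on a closed surface there is a \emph{unique} metric of constant curvature and prescribed area — for $g=1$ this is elementary (the flat metrics in a conformal class form a ray under scaling, so the area normalization picks out exactly one), while for $g\ge2$ it is the standard but nontrivial uniformization statement; and (ii) that a homotopy of homeomorphisms can be upgraded to an isotopy through diffeomorphisms lying in $\text{Diff}_0(S_g)$, so that "commutes up to homotopy" genuinely translates into acting by $\text{Diff}_0(S_g)$ — this uses that on surfaces homotopic diffeomorphisms are isotopic. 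Since the excerpt only promises to "discuss the genus 1 case," I would give the flat-torus argument in full detail and cite the standard references for the hyperbolic case.
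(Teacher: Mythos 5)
Your argument is correct and complete in outline, but it is a genuinely different route from the paper: the paper gives no proof of Proposition \ref{canonicalbij} at all, simply citing Theorem 1.8 of \cite{imayoshi}, and then establishes the $g=1$ case concretely by realizing every marked genus-1 surface as $\C/\Lambda$, using the marking to pin down a basis $\{1,\zeta\}$ of the lattice, and rescaling the fundamental parallelogram to area 1 --- an explicit coordinate description that feeds directly into the later identifications with $\SO(2)\backslash\SL(2,\R)$ and $\HH^2$. Your abstract two-directional construction (conformal class of a metric $\mapsto$ marked Riemann surface with identity marking, and uniformizing metric pulled back along the marking in the other direction) is the standard general proof, and you correctly isolate the two real inputs: uniqueness of the constant-curvature metric of prescribed area within a conformal class (for $g=1$ via harmonicity of the conformal factor on a compact surface, for $g\geq2$ via uniformization), and the upgrade from ``homotopic'' to ``isotopic through diffeomorphisms'' on surfaces. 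What your approach buys is self-containedness and uniform treatment of all genera; what the paper's lattice description buys is the explicit parametrization actually used in the rest of the argument. One small point worth making explicit: the markings in Definition \ref{classical} are only homeomorphisms, so ``pulling back the uniformizing metric along $f^{-1}$'' requires first replacing $f$ by a homotopic diffeomorphism (possible on surfaces, by the same Baer--Epstein-type result you invoke in (ii)); as written this is a regularity gap of the same kind you already flag, not a new idea.
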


This is a special case of Theorem 1.8 in \cite{imayoshi}. By the uniformization theorem, each complex structure in $\mathcal{T}_g$ is conformally equivalent to a quotient of the complex plane $\C$ with flat metric if $g=1$, or the upper half-plane $\HH^2$ with hyperbolic metric if $g>2$, by the action of $\pi_1(S_g)$ given by deck transformations.

Henceforth we focus on the case $g=1$, where $S_g$ is the torus $\mathbb{T}^2$. By the above, all closed genus 1 Riemann surfaces are given by $\C/\Lambda$, where $\Lambda\subset\C$ is a lattice in $\C$. Now, the marking $f:S\to S_g$ specifies a basis of the lattice by taking the pre-image of a basis for the lattice generating $S_g$ under the lift of $f$. We may take the standard basis $\{1,i\}\subset\C$ for $S_g$. Notice that the operations of rotation or scaling do not alter the marking or conformal class of the quotient. Hence by an appropriate homothety, we may assume that the basis consists of $\{1,\zeta\}$ for some $\zeta\in\C$, and since $f$ is orientation-preserving, we have that $\zeta\in\HH^2$, the upper half-plane. This shows each marked torus may be associated to some $\zeta\in\HH^2$. 

It is not hard to see that such a $\zeta$ is unique for each marked torus. If $\{1,\zeta\}$ and $\{1,\zeta'\}$ are two distinct bases, there are two cases. If the generated lattices are the same, then they differ by the action of an element of $SL(2,\Z)$, which is a change of marking. If the lattices are distinct, then the quotient tori are not conformally equivalent. In either case, they correspond to different elements of Teichm\"uller space. This shows the correspondence between the Teichm\"uller space of the torus and the hyperbolic plane $\HH^2$ is bijective.  

We will now see how to obtain the canonical class of Riemannian metrics for each marked torus. Pick $[S,f]\in\mathcal{T}_1$, and let $\Lambda$ be an associated $\C$-lattice as obtained above with a choice of basis $\{1,\zeta\}$ corresponding to the marking. The Euclidean metric on $\C$ descends to a flat metric on the quotient $\C/\Lambda$. Consider a fundamental parallelogram $P\subset\C$ spanned by this basis. Without changing the conformal structure or marking, we may scale the parallelogram to have area 1, obtaining the new basis:
$$
\bigg\{\frac{1}{\sqrt{\Im(\zeta)}},\frac{\zeta}{\sqrt{\Im(\zeta)}}\bigg\}
$$
Now, the only operation on $\C$ fixing the corresponding element of Teichm\"uller space which preserves volume and orientation is a rotation. Thus, up to the action of $\SO(2)$, the choice of parallelogram is unique.

Notice further that any area 1 parallelogram may be obtained up to congruence and orientation uniquely (up to the action of $SO(2)$) from the unit square in the first quadrant by an element of $SL(2,\R)$. We have now demonstrated the identification
$$
\mathcal{T}_1 \leftrightarrow \SO(2)\backslash\SL(2,\R)
$$
between the Teichm\"uller space of the torus and the symmetric space $\SO(2)\backslash\SL(2,\R)$. A representative $M$ of an element of this symmetric space may be interpreted as a flat metric on the torus induced via the quotient of the parallelogram $P$ based at the origin, where $P$ has sides $[M(1),M(i)]$. This is a canonical association of an element of the Teichm\"uller space of the torus to a class of flat metrics. 

Now, consider the transitive and faithful action of $\SL(2,\R)$ on $\HH^2$ by fractional linear transformations on complex numbers. The stabilizer of $i\in\HH^2$ is given by the group $\SO(2)\subset\SL(2,\R)$. Using standard results from the theory of Lie groups, one obtains the well-known isometric identification of $\HH^2$ with the symmetric space
$$
\SO(2)\backslash\SL(2,\R)\leftrightarrow\HH^2.
$$
One can see that the action of $\SL(2,\R)$ on $\SO(2)\backslash\SL(2,\R)$ defined via matrix multiplication is equivariant with respect to the identification with $\HH^2$. This completes the identifications of the Teichm\"uller space of the torus $\mathcal{T}_1$, the symmetric space $\SO(2)\backslash\SL(2,\R)$, and the hyperbolic plane $\HH^2$.

\begin{rem} 
The importance of the marking is more evident from the fact that it encodes the topological data of a choice of basis for the first homology group. Given a choice of basis of $H_1(S_g)$ on the fixed surface $S_g$, one may directly compare elements of the homology groups on different marked surfaces by pulling back via the marking. More concretely, this results in a choice of fundamental parallelogram in $\C$ whose sides descend to the generating curves. One further obtains a canonical choice of homotopy class of maps between representative surfaces for elements of Teichm\"uller space. For instance, in the case of $g=1$ one may choose the fixed surface to be 
$$
S_1=\C/(\Z+i\Z)
$$
with the projections of the line segments starting at the origin and ending at 1 and $i$ as a concrete choice of generating set for $H_1(S_1)$. Consider then another Riemann surface, say 
$$
S = \C/(\Z+\tau\Z)
$$
for some $\tau$ in the upper half-plane, with marking $f:S\to S_1$. Notice that the choice of basis for the lattice $\Lambda = \Z+\tau\Z$ is not unique for the conformal class of $S$, and the action of $SL(2,\Z)$ on $\Lambda$ will give infinitely many different possibilities. Fortunately, the induced map on homology makes a choice for us:
$$
f_*:H_1(S)\to H_1(S_1).
$$
Because $f$ is an orientation-preserving homeomorphism, the preimages $f_*^{-1}([1])$ and $f_*^{-1}([i])$ constitute a basis of $H_1(S)$. Next, with an appropriate homothety we may require the lift $\tilde{f}:\C\to\C$ to fix 0. Then $\tilde{f}^{-1}(1)$ and $\tilde{f}^{-1}(i)$ are the endpoints of line segments starting at the origin which project down to curves on $S$ which generate $H_1(S)$. We can even go one step further and note that these endpoints will be the images of $1$ and $\tau$ under some element of $SL(2,\Z)$ corresponding to the basis chosen by the marking. 
\end{rem}

Next, we will review the Teichm\"uller metric, which is the first metric to be defined on $\mathcal{T}_g$, and is a natural definition using the complex-analytic definition of Teichm\"uller space. Note that the following definition applies to the Teichm\"uller space $\mathcal{T}_g$ for any $g\geq1$. Let $[S,f],[S',f']\in\mathcal{T}_g$. Then the map $f'^{-1}\circ f$ is an orientation-preserving homeomorphism from $S$ to $S'$. For a homeomorphism $g:S\to S'$, let $K_g$ denote the supremum of the dilatation of the map $g$ across all points where $g$ is differentiable. Recall that the quasiconformal distortion of a (real) differentiable map $g:\C\to\C$ is given by:
\begin{equation}
\label{qcdef}
K_g = \sup_{\C}\frac{|g_z|+|g_{\bar{z}}|}{|g_z|-|g_{\bar{z}}|}.
\end{equation}
Then the \textit{Teichm\"uller metric} on $\mathcal{T}_g$ is defined as:
$$
d_{Teich}([S,f],[S',f']) = \frac{1}{2}\log \inf_{g\in[f'^{-1}\circ f]}(K_g)
$$
where the infimum is taken over all homeomorphisms which are smooth except at finitely many points $g$ in the homotopy class $[f'^{-1}\circ f]$. One can show this defines a metric on $\mathcal{T}_g$; the proof is relatively straightforward using basic properties of quasiconformal distortion and is given in \S5.1 of \cite{imayoshi}. 

We now restrict to the case of genus $g=1$. First, recall the Poincar\'e metric on $\HH^2$, given by:
$$
d_{\HH^2}(z_1,z_2) = \log\frac{|z_1-\bar{z_2}| + |z_1-z_2|}{|z_1-\bar{z_2}|-|z_1-z_2|}
$$
This is the Riemannian distance of the hyperbolic metric on the upper half-plane $\HH^2$, which is given by
$$
ds^2 = \frac{dx^2+dy^2}{y^2}.
$$

Recall the following remarkable theorem of Teichm\"uller in \cite{teich}:
\begin{prop}
\label{h2t1equiv}
Under the identification described above of the hyperbolic plane $\HH^2$ with $\mathcal{T}_1$ equipped with the metric $d_{\text{Teich}}$, the two are isometric metric spaces up to a multiplicative factor of 2. 
\end{prop}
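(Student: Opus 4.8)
The plan is to reduce the statement to a direct computation of the extremal quasiconformal constant between two marked tori. Work under the identification of the preceding discussion, so that $\tau\in\HH^2$ corresponds to the marked torus $\C/(\Z+\tau\Z)$ with the standard marking (identifying the loops of $1$ and $\tau$ with those of $1$ and $i$ on $S_1=\C/(\Z+i\Z)$). For $\tau,\tau'\in\HH^2$, the homotopy class $[f'^{-1}\circ f]$ of homeomorphisms $\C/(\Z+\tau\Z)\to\C/(\Z+\tau'\Z)$ is exactly the class of the $\R$-linear (affine) map $A$ with $A(1)=1$ and $A(\tau)=\tau'$, since homotopy classes of self-maps of the $2$-torus are detected by their action on $\pi_1$. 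Thus
\[
d_{Teich}(\tau,\tau')=\tfrac12\log\inf_{g\simeq A}K_g,
\]
and it remains to (i) show this infimum equals $K_A$, and (ii) compute $K_A$ and compare it with the hyperbolic distance.

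For (i), the affine map $A$ itself shows $\inf_g K_g\le K_A$. For the reverse inequality I would run the classical length--area (Gr\"otzsch) argument: lift a quasiconformal $g\simeq A$ to $\tilde g\colon\C\to\C$ with $\tilde g(z+\omega)=\tilde g(z)+A\omega$ for all $\omega\in\Lambda=\Z+\tau\Z$; for a primitive class $c=p+q\tau\in\Lambda$, foliate $\C/\Lambda$ by the closed flat geodesics homotopic to $c$, observe that $\tilde g$ carries each leaf to an arc of length at least $|A(c)|=|p+q\tau'|$, then integrate the pointwise bound $|\partial_u\tilde g|\le|c|(|\tilde g_z|+|\tilde g_{\bar z}|)$ over the transverse parameter against Euclidean area (covolume $\Im\tau$), apply Cauchy--Schwarz, and use $(|\tilde g_z|+|\tilde g_{\bar z}|)^2\le K_g\,J_{\tilde g}$ together with $\int_{\C/\Lambda}J_{\tilde g}=\Im\tau'$. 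This gives
\[
K_g\ \ge\ \frac{\Im\tau}{\Im\tau'}\cdot\frac{|p+q\tau'|^2}{|p+q\tau|^2};
\]
since primitive classes are dense among real directions, taking the supremum over $c$ yields $K_g\ge\frac{\Im\tau}{\Im\tau'}\,\|A\|_{\mathrm{op}}^2=K_A$, using $\|A\|_{\mathrm{op}}=\sigma_1$, $\sigma_1\sigma_2=\det A=\Im\tau'/\Im\tau$, and $\sigma_1/\sigma_2=K_A$. Alternatively, one can invoke Teichm\"uller's uniqueness theorem, already cited above: the holomorphic quadratic differentials on a torus are the constant multiples of $dz^2$, so the extremal map is affine.

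For (ii), in the standard real frame $\{1,i\}$ the map $A$ has matrix
\[
M_A=\begin{pmatrix}1 & (\Re\tau'-\Re\tau)/\Im\tau\\[3pt] 0 & \Im\tau'/\Im\tau\end{pmatrix},
\]
so $\det M_A=\Im\tau'/\Im\tau$ and a short computation gives
\[
K_A+K_A^{-1}=\frac{\mathrm{tr}(M_A^{\top}M_A)}{\det M_A}=\frac{(\Re\tau-\Re\tau')^2+(\Im\tau)^2+(\Im\tau')^2}{\Im\tau\,\Im\tau'}=2+\frac{|\tau-\tau'|^2}{\Im\tau\,\Im\tau'}.
\]
Comparing with the standard identity $\cosh d_{\HH^2}(\tau,\tau')=1+\frac{|\tau-\tau'|^2}{2\,\Im\tau\,\Im\tau'}$ gives $K_A+K_A^{-1}=2\cosh d_{\HH^2}(\tau,\tau')$, and since $t\mapsto t+t^{-1}$ is strictly increasing on $[1,\infty)$ we conclude $\log K_A=d_{\HH^2}(\tau,\tau')$. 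Hence $d_{Teich}(\tau,\tau')=\tfrac12 d_{\HH^2}(\tau,\tau')$, which is the asserted isometry up to the multiplicative factor $2$.

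I expect the only genuine obstacle to be the extremality of the affine map in step (i): applying the length--area estimate to the single homotopy class $[1]$ gives only the weak bound $K_g\ge\Im\tau/\Im\tau'$, and the sharp constant is recovered only by optimizing over all slopes, which is precisely what produces the operator norm of $A$. Once extremality is in hand, the rest is a $2\times2$ matrix computation and a hyperbolic trigonometry identity; and if one prefers to bypass the length--area argument entirely, citing Teichm\"uller's theorem makes (i) immediate since quadratic differentials on the torus are constant.
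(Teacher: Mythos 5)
Your proposal is correct, and its skeleton matches the paper's: reduce to the affine map in the marked homotopy class, show it is extremal for quasiconformal distortion, and compute its dilatation. The differences are in the two supporting steps. For extremality, the paper simply cites Teichm\"uller's theorem (via \cite{teich} and \cite[Theorem V.6.3]{lehto}), whereas you sketch a self-contained Gr\"otzsch length--area argument over all primitive classes $p+q\tau$; your sketch is sound (the supremum over primitive slopes does recover $\|A\|_{\mathrm{op}}^2$ and hence $K_A$ after dividing by the Jacobian), and it is the more elementary route, at the cost of some analytic care about absolute continuity on lines for a general quasiconformal competitor. For the computation, the paper writes the affine lift in complex form $\psi(z)=\frac{\tau'-\bar\tau}{\tau-\bar\tau}z+\frac{\tau'-\tau}{\tau-\bar\tau}\bar z$ and reads off $\log K_\psi=\log\frac{|\tau'-\bar\tau|+|\tau'-\tau|}{|\tau'-\bar\tau|-|\tau'-\tau|}$, which is literally the displayed formula for the Poincar\'e distance; you instead work with the real matrix $M_A$, derive $K_A+K_A^{-1}=2+\frac{|\tau-\tau'|^2}{\Im\tau\,\Im\tau'}$, and match it against $\cosh d_{\HH^2}$. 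Both computations are correct and equivalent; the paper's is slightly shorter because its chosen form of the hyperbolic distance makes the identification immediate, while yours avoids having to quote that particular closed form and uses only the $\cosh$ identity and monotonicity of $t\mapsto t+t^{-1}$.
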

This fact is less surprising when one considers the intimate relationship between quasiconformal distortion and the Poincar\'e metric. We give a very rough description inspired by \S6.5 and \S6.6 of \cite{lehto}. Note that this section only depends on the (marked) complex structure, so no normalization is required.

Let $$S=\C/(\Z+\tau\Z)\ \text{and}\ S' = \C/(\Z+\tau'\Z)$$ be two tori with distinguished bases 
$$\{1,\tau\} \aand \{1,\tau'\}.$$ 
The appropriate homotopy class for maps $\psi:S\to S'$ is determined by the following property: the lifts to $\C$ must satisfy $$0\mapsto0,\ 1\mapsto1, \aand \tau\mapsto\tau'.$$ 

By a theorem of Teichm\"uller in \cite{teich} (see also \cite[Theorem V.6.3]{lehto}), the minimal quasiconformal distortion among maps $\psi:S\to S'$ in the appropriate homotopy class is realized by the projection of the unique affine map $\tilde{\psi}:\C\to\C$ determined by these conditions. An affine map has the same quasiconformal distortion at any point, so the distortion is ``evenly distributed" across the domain. Because it is affine, one can write this map $\tilde{\psi}:\C\to\C$ in the form 
$$
\psi(z) = \bigg(\frac{\tau'-\bar{\tau}}{\tau-\bar{\tau}} \bigg) z+\bigg(\frac{\tau'-\tau}{\tau-\bar{\tau}}\bigg)\bar{z}.
$$ 
This map has constant quasiconformal distortion across the domain, which is relatively straightforward to compute. For the map $\psi$ defined above, we have, by (\ref{qcdef}):
$$
\log K_{\psi} = \log\frac{|\tau'-\bar{\tau}|+|\tau'-\tau|}{|\tau'-\bar{\tau}|-|\tau'-\tau|} = h(\tau,\tau')
$$
where $h$ is the Poincar\'e metric. 

Next, we will very briefly discuss the Weil-Petersson metric, first explored in \cite{weil}. See Chapter 7 of \cite{imayoshi} for a full presentation. We will start with the usual case of hyperbolic surfaces with $g\geq2$. Let $[S,f]$ represent an element $\eta\in\mathcal{T}_g$. Let $Q(S)$ be the vector space of holomorphic quadratic differentials on $S$. These may be written as expressions of the form $\varphi(z)dz^2$, where $\varphi$ is a holomorphic function on $S$ and $z$ is a complex coordinate for $S$; this notation emphasizes that they transform like 2-forms. More abstractly, they are holomorphic sections of the symmetric square of the holomorphic cotangent bundle of $S$. There is a canonical bijective identification of $Q(S)$ with the cotangent space $T^*_{\eta}\mathcal{T}_g$. Now, $Q(S)$ may be equipped with an inner product. For $q_1,q_2\in Q(S)$ this is defined as:
$$
\langle q_1,q_2\rangle = \int_S\bar{q_1}q_2(ds^2)^{-1},
$$
where $ds^2$ is the hyperbolic metric on the Riemann surface. This is a Hermitian cometric, which induces an inner product on the tangent space $T_{\eta}\mathcal{T}_1$s. The resulting metric is known as the Weil-Petersson metric. Ahlfors showed this gives a K\"ahler structure on $\mathcal{T}_g$ with negative Ricci, scalar, and holomorphic sectional curvatures (\cite{ahlfors1}, \cite{ahlfors2}). 

In genus $g=1$, Imayoshi and Taniguchi in \S7.3.5 of \cite{imayoshi} have given an analogous definition of Weil-Petersson metric on $\mathcal{T}_1$ using the identification of $\HH^2$ with $\mathcal{T}_1$. If $\tau\in\HH^2$ represents an element $[S,f]$ of Teichm\"uller space, the genus 1 analog of the norm given by the Petersson inner product is defined by:
$$
\langle \frac{\partial}{\partial\tau},\frac{\partial}{\partial\tau}\rangle = \iint_{S}|\tau-\bar{\tau}|^{-2}\lambda_{\tau}^2dxdy = \frac{1}{4\Im(\tau)^2}
$$
where $\lambda_{\tau} = 1/\sqrt{\Im(\tau)}$ normalizes the area of $S$ to 1. Notice that the tangent and cotangent spaces of $\mathcal{T}_1$ are complex dimension 1, so we need only define the inner product for one generator. The first term $|\tau-\bar{\tau}|^{-2}$ comes from the derivative of the Beltrami coefficient for a map representing an infinitesimal change, $S_{\tau}\to S_{\tau+t}$ for small $t$, in Teichm\"uller space. This is the appropriate analog for the Teichm\"uller space of genus 1 Riemann surfaces. Putting it all together, one obtains a metric on $\mathcal{T}_1$:
$$
ds_{WP}^2 = \frac{1}{2\Im(\tau)^2}|d\tau|^2
$$
which matches the hyperbolic metric on $\HH^2$ up to a multiple of 2.

We summarize the above discussion in the following theorem, referring the reader \cite{lehto}, Theorem 6.4, for details:
\begin{thm}
The map
$$
j:\HH^2\to\mathcal{T}_1
$$
which associates to each element $\zeta\in\HH^2$ the equivalence class $[S_{\zeta},f_{\zeta}]$ of the flat marked torus $S_{\zeta}$ is an isometry if $\HH^2$ is equipped with the Poincar\'e metric and $\mathcal{T}_1$ with the Teichm\"uller metric, and also if $\mathcal{T}_1$ is equipped with the genus 1 Weil-Petersson metric, up to a factor of 2. 
\end{thm}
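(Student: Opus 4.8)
The plan is to assemble the ingredients already developed in this section: the set-theoretic bijection $j\colon\HH^2\to\mathcal{T}_1$, $\zeta\mapsto[S_\zeta,f_\zeta]$; the explicit evaluation of the Teichm\"uller distance through extremal affine maps; and the explicit form of the genus $1$ Weil--Petersson metric tensor. Since both assertions concern the same underlying map $j$, which we already know to be a real-analytic bijection onto $\mathcal{T}_1$, it suffices to compare the two distance functions (respectively, the two metric tensors) pointwise on $\HH^2$.

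For the Teichm\"uller metric I would fix $\zeta,\zeta'\in\HH^2$, set $S=\C/(\Z+\zeta\Z)$ and $S'=\C/(\Z+\zeta'\Z)$ with their distinguished bases $\{1,\zeta\}$ and $\{1,\zeta'\}$, and observe that, by the construction of the markings in terms of the fixed basis $\{1,i\}$ of $H_1(S_1)$, the homotopy class $[f_{\zeta'}^{-1}\circ f_\zeta]$ in the definition of $d_{Teich}$ is precisely the class of maps whose lift to $\C$ satisfies $0\mapsto0$, $1\mapsto1$, $\zeta\mapsto\zeta'$. By Teichm\"uller's theorem (\cite{teich}; see also \cite[Theorem V.6.3]{lehto}) this class contains a unique extremal quasiconformal map, the projection of the affine map $\tilde\psi$ written above, which has constant dilatation $K_\psi$. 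Hence
$$
d_{Teich}(j(\zeta),j(\zeta'))=\frac{1}{2}\log K_\psi=\frac{1}{2}\log\frac{|\zeta'-\bar\zeta|+|\zeta'-\zeta|}{|\zeta'-\bar\zeta|-|\zeta'-\zeta|},
$$
and comparison with the closed form for $d_{\HH^2}$ recalled above identifies the right-hand side with $\tfrac12\,d_{\HH^2}(\zeta,\zeta')$. This yields the first claim.

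For the Weil--Petersson metric I would invoke the computation $\langle\partial_\tau,\partial_\tau\rangle=\frac{1}{4\,\Im(\tau)^2}$ of Imayoshi--Taniguchi, giving $ds_{WP}^2=\frac{1}{2\,\Im(\tau)^2}\,|d\tau|^2$, i.e.\ exactly one half of the Poincar\'e metric tensor $\Im(\tau)^{-2}|d\tau|^2$. Since $j$ pulls the Weil--Petersson metric back to this rescaled Poincar\'e tensor, it is an isometry up to the stated scalar; and if one wishes to phrase this at the level of distance functions one only needs the elementary fact that multiplying a length metric by a positive constant multiplies the induced path metric by the same constant.

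The substantive input is entirely borrowed: Teichm\"uller's existence-and-uniqueness theorem for the extremal quasiconformal map (and its identification with the affine map between tori), together with the derivative-of-the-Beltrami-coefficient computation underlying the genus $1$ Weil--Petersson norm. The only work left for us is bookkeeping --- verifying that the marking conventions make ``the'' relevant homotopy class the three-point-normalized one, and keeping the factors of $2$ consistent between the $\frac12\log K$ normalization of $d_{Teich}$ and the metric-tensor normalization of $ds_{WP}^2$ --- so there is no genuine obstacle here, only care with constants.
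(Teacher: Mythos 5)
Your proposal is correct and takes essentially the same route as the paper: the theorem there is explicitly a summary of the preceding discussion, which likewise identifies the extremal quasiconformal map with the affine map $\tilde\psi$, computes $\log K_\psi=\log\frac{|\tau'-\bar\tau|+|\tau'-\tau|}{|\tau'-\bar\tau|-|\tau'-\tau|}$ to match the Poincar\'e distance, and quotes the Imayoshi--Taniguchi computation $ds_{WP}^2=\frac{1}{2\Im(\tau)^2}|d\tau|^2$ for the Weil--Petersson part. Your bookkeeping of the markings and of the factor of $2$ coming from the $\frac12\log K$ normalization agrees with Proposition \ref{h2t1equiv} and the surrounding text.
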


We will end with a brief but important lemma which follows almost immediately from the work already done in this section. 

\begin{lem}
\label{funddomain}
Given an element $[S,f]\in\mathcal{T}_1$, the space of marked flat tori of area 1, there exists a unique parallelogram $P\subset\C$ with the following properties:
\begin{itemize}
\item $P$ has area 1
\item $P$ is based at the origin 
\item One side of $P$ lies along the positive real axis 
\item The interior of $P$ is contained in the upper half plane
\item The spanning sides based at the origin descend to the generators of $H_1(S)$ determined by the marking $f$
\end{itemize}
\end{lem}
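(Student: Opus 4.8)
The plan is to read $P$ off directly from the identification of $\mathcal{T}_1$ with $\HH^2$ built up earlier in this section, and then to deduce uniqueness from the fact, established there, that an area-$1$ parallelogram representing a fixed marked torus is unique up to the action of $\SO(2)$. For \emph{existence}: let $\zeta\in\HH^2$ be the point associated to $[S,f]$, so that $S\cong\C/\Lambda$ with $\Lambda=\Z\langle 1,\zeta\rangle$ and the segments $[0,1]$, $[0,\zeta]$ project to the generators of $H_1(S)$ determined by $f$. Since a homothety alters neither the marking nor the conformal class, I would rescale by $1/\sqrt{\Im(\zeta)}$ and set
$$
P=\left\{\,\frac{s+t\zeta}{\sqrt{\Im(\zeta)}}\ :\ s,t\in[0,1]\,\right\}.
$$
The five properties are then checked one at a time: the parallelogram spanned by $1$ and $\zeta$ has area $\Im(\zeta)$, so after the rescaling the area is $1$; $P$ is based at the origin by construction; the side $[0,1/\sqrt{\Im(\zeta)}]$ lies on the positive real axis because $\Im(\zeta)>0$; an interior point $(s+t\zeta)/\sqrt{\Im(\zeta)}$ with $s,t\in(0,1)$ has imaginary part $t\sqrt{\Im(\zeta)}>0$; and the two spanning sides of $P$ are the images of $[0,1]$ and $[0,\zeta]$ under the homothety, hence project to the same curves, namely the $f$-determined generators. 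This is little more than a repackaging of the construction of the canonical flat metric given above.

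For \emph{uniqueness}, suppose $P'$ is another parallelogram satisfying all five conditions, with spanning sides $v_1,v_2$ issuing from the origin. Positivity of the area makes $v_1,v_2$ linearly independent over $\R$; a side lying on the positive real axis must then be one of the two sides through the origin, since a non-origin side on the real line would force $\Im v_1=\Im v_2=0$. Combined with the requirement that the interior lie in $\HH^2$, this shows one of $v_1,v_2$ is a positive real number and the other has strictly positive imaginary part. The marked torus determined by $P'$ (via the last bullet) is $[S,f]$, so the uniqueness up to $\SO(2)$ recalled above gives $P'=R\cdot P$ for some rotation $R\in\SO(2)$, with the spanning sides of $P'$ the images under $R$ of those of $P$. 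One of these images is a positive real number: if it is $R$ applied to the already-real side of $P$, then $R$ fixes the positive real axis and $R=\mathrm{id}$, so $P'=P$; the only other possibility, that $R$ carries the side of $P$ lying in $\HH^2$ onto the positive real axis, forces $R$ applied to the real side of $P$ to have negative imaginary part, contradicting the condition that the interior of $P'$ lie in $\HH^2$. Hence $P'=P$. (Equivalently, the five conditions say exactly that a matrix in $\SL(2,\R)$ representing $[S,f]$ under the correspondence $\mathcal{T}_1\leftrightarrow\SO(2)\backslash\SL(2,\R)$ is upper triangular with positive diagonal entries, and such a coset representative is unique by the Iwasawa decomposition.)

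There is no essential difficulty in any of this — the statement is an unwinding of identifications already in hand. The one step that needs a little care, and which I would regard as the main obstacle, is the case analysis at the end of the uniqueness argument that rules out a rotation interchanging the roles of the two generators; this is precisely where the hypothesis that the interior of $P$ lie in the upper half plane (rather than, say, mere positivity of its area) is doing the work, since a rotation taking the ``upper'' spanning side of $P$ onto the positive real axis necessarily drops its real spanning side below the axis.
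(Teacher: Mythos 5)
Your proof is correct and takes essentially the same route as the paper: the existence half is the identical construction (rescale the parallelogram spanned by $1$ and $\zeta$ by $1/\sqrt{\Im(\zeta)}$), and your uniqueness argument simply spells out in detail what the paper dismisses with ``uniqueness follows from uniqueness of $\tau$,'' namely that the normalization conditions single out one representative of the $\SO(2)$-orbit. The extra case analysis ruling out the rotation that swaps the roles of the two spanning sides is a worthwhile addition, but it does not change the approach.
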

\begin{proof}
Let $[S,f]\in\mathcal{T}_1$. By the uniformization theorem and the standard correspondence $\mathcal{T}_1\leftrightarrow\HH^2$, we obtain a unique $\tau\in\HH^2$ such that $S$ is the quotient of the $\C$-lattice $\Lambda$ generated by $\{1,\tau\}$ with the generators corresponding to the marking as prescribed by condition 5. Define a new lattice $\Lambda'$ which is generated by $\{1/\sqrt{\Im\tau}, \tau/\sqrt{\Im\tau}\}$. Then the parallelogram $P$ is described by the following vertices: 
$$
\bigg\{0,\ \frac{1}{\sqrt{\Im\tau}},\ \frac{\tau}{\sqrt{\Im\tau}},\ \frac{1}{\sqrt{\Im\tau}}+\frac{\tau}{\sqrt{\Im\tau}}\bigg\}.
$$
The sides based at the origin clearly still correspond to the same marking $f$. Uniqueness follows from uniqueness of $\tau$, and the other properties are obvious. 
\end{proof}

The parallelogram $P$ described above is also a \textit{fundamental domain} for the action of the appropriate lattice $\Lambda\subset\C$ which gives $\C/\Lambda = S$.

\section{Extremal Lipschitz maps between tori}
\label{lipschitzsection}

Let $[S,f]$ and $[S',f']$ be two elements of the Teichm\"uller space $\mathcal{T}_1$, with $S=\C/\Lambda$ and $S'=\C/\Lambda'$. We will show that the map $\psi:S\to S'$ which lifts to an affine map $\tilde{\psi}:\C\to\C$ compatible with the marking realizes the minimal Lipschitz constant in its homotopy class. Recall that the chosen homotopy class is given by $[f'^{-1}\circ f]$. As pointed out in \S2 earlier, the map $\psi$ also realizes the Teichm\"uller distance. 

Recall the following useful results \cite[Lemma V.6.2, Theorem IV.3.5]{lehto}:

\begin{prop}
\label{lehtohomotopy1}
\item The group of conformal self-maps on a torus acts transitively on the torus.
\end{prop}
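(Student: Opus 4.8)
The plan is to exploit the abelian group structure that a torus inherits as a quotient $\C/\Lambda$ of the complex plane by a lattice, and to observe that this group acts on itself by translations, which are conformal.

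First I would fix an identification of the torus $S$ with $\C/\Lambda$ coming from the uniformization discussed in Section~\ref{background}, so that the projection $\pi\colon\C\to\C/\Lambda$ is holomorphic and a local biholomorphism whose deck group is $\Lambda$ acting by translations. For any $c\in\C$, the translation $T_c\colon z\mapsto z+c$ of $\C$ satisfies $T_c(z+\omega)=T_c(z)+\omega$ for every $\omega\in\Lambda$, so it descends to a well-defined map $\bar T_c\colon\C/\Lambda\to\C/\Lambda$ with $\pi\circ T_c=\bar T_c\circ\pi$. Since $\pi$ is a holomorphic covering and $T_c$ is biholomorphic, $\bar T_c$ is a conformal self-map of $S$, with conformal inverse $\bar T_{-c}$.

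Next, given two points $p=\pi(z_0)$ and $q=\pi(z_1)$ of $S$, I would set $c=z_1-z_0$ and note $\bar T_c(p)=q$. Hence already the subgroup of translations acts transitively on $S$, and a fortiori so does the full group of conformal self-maps. Alternatively, one may phrase the whole argument in one line: $\C/\Lambda$ is a complex Lie group, and it acts holomorphically and transitively on itself by left translation.

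There is essentially no obstacle here; the only points requiring a word of justification are that translations of $\C$ are compatible with the deck group $\Lambda$ (immediate, since $\Lambda$ also acts by translations and translations commute) and that descending a biholomorphism along a holomorphic covering map produces a biholomorphism of the quotient.
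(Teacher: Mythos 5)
Your proof is correct: the translation action of $\C/\Lambda$ on itself is conformal and transitive, and every step you flag (compatibility of translations with the deck group, descent of biholomorphisms along the covering) is justified. The paper itself offers no proof of this proposition---it simply cites it as Lemma V.6.2 of \cite{lehto}---so your argument supplies the standard one-line Lie-group/translation proof that the authors leave to the reference.
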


\begin{prop}
\label{lehtohomotopy2}
Two homeomorphisms $g_i:S\to S'$ between Riemann surfaces induce the same isomorphism between the group of deck transformations acting on the universal cover if they are homotopic.
\end{prop}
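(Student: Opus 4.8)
\emph{Proof proposal.} The plan is to lift everything to the universal covers and exploit the discreteness of the deck transformation groups. Write $\pi_S\colon \tilde S\to S$ and $\pi_{S'}\colon \tilde S'\to S'$ for the universal coverings, and let $\Gamma,\Gamma'$ be the associated groups of deck transformations, so $S=\tilde S/\Gamma$ and $S'=\tilde S'/\Gamma'$. First I would recall the standard construction of the induced isomorphism: given a homeomorphism $g\colon S\to S'$ and a lift $\tilde g\colon\tilde S\to\tilde S'$ (which exists since $\tilde S$ is simply connected), for each $\gamma\in\Gamma$ the maps $\tilde g\circ\gamma$ and $\tilde g$ both cover $g\circ\pi_S$, so they differ by a unique $\gamma'\in\Gamma'$ with $\tilde g\circ\gamma=\gamma'\circ\tilde g$; the assignment $\gamma\mapsto\gamma'$ is the isomorphism $g_*\colon\Gamma\to\Gamma'$. (Replacing $\tilde g$ by another lift conjugates $g_*$ by a deck transformation; in the genus one case $\Gamma\cong\Z^2$ is abelian, so $g_*$ is actually independent of this choice, which is the only situation we need.)

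Next I would set up the homotopy. Suppose $g_0\simeq g_1$ via $H\colon S\times[0,1]\to S'$. Since $\tilde S\times[0,1]$ is simply connected, $H$ lifts to $\tilde H\colon\tilde S\times[0,1]\to\tilde S'$, and I choose the lift with $\tilde H_0=\tilde g_0$; then I set $\tilde g_1:=\tilde H_1$, a lift of $g_1$. For each fixed $t$ the slice $\tilde H_t$ covers $H_t\colon S\to S'$, so the construction of the previous paragraph — which uses only that a lift of a fixed map is determined by its value at one point — yields, for every $\gamma\in\Gamma$, a unique $\phi(\gamma,t)\in\Gamma'$ with $\tilde H_t\circ\gamma=\phi(\gamma,t)\circ\tilde H_t$, and by construction $\phi(\gamma,0)=(g_0)_*(\gamma)$ and $\phi(\gamma,1)=(g_1)_*(\gamma)$.

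The core of the argument is then to show $t\mapsto\phi(\gamma,t)$ is constant. Fixing $x_0\in\tilde S$, the paths $p(t)=\tilde H(\gamma x_0,t)$ and $q(t)=\tilde H(x_0,t)$ are continuous in $\tilde S'$, have the same projection $H_t(\pi_S(x_0))$ to $S'$ for every $t$, and satisfy $p(t)=\phi(\gamma,t)\,q(t)$. Around a fixed $t_0$, taking an evenly covered neighborhood $U$ of $\pi_{S'}(p(t_0))$ and the sheet $V\subset\pi_{S'}^{-1}(U)$ containing $p(t_0)=\phi(\gamma,t_0)\,q(t_0)$, one checks that for $t$ near $t_0$ both $p(t)$ and $\phi(\gamma,t_0)\,q(t)$ lie in $V$ and project to the same point, hence coincide; since $\Gamma'$ acts freely, $\phi(\gamma,t)=\phi(\gamma,t_0)$ near $t_0$. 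Thus $\phi(\gamma,\cdot)$ is locally constant on the connected interval $[0,1]$, so $\phi(\gamma,0)=\phi(\gamma,1)$, i.e.\ $(g_0)_*=(g_1)_*$ for the chosen lifts.

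The main obstacle is precisely this last local-constancy step: one must lean on proper discontinuity and freeness of the $\Gamma'$-action and track the evenly covered neighborhoods carefully, because $\tilde H_t$ need not be a homeomorphism and one cannot simply write $\phi(\gamma,t)=\tilde H_t\circ\gamma\circ\tilde H_t^{-1}$. A secondary point to state cleanly is the dependence of $g_*$ on the choice of lift; for surfaces of genus $\geq 2$ one obtains equality of the induced isomorphisms only up to conjugation, whereas for the torus the abelianity of $\pi_1$ eliminates the ambiguity entirely, which is all the sequel requires.
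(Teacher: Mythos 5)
Your proof is correct. Note, however, that the paper does not prove this statement at all: it is quoted verbatim as Theorem IV.3.5 of Lehto's book, so there is no in-paper argument to compare against. Your covering-space argument is the standard one, and the delicate points are handled properly: you correctly observe that $\tilde H_t$ need not be invertible (so one cannot conjugate by it), you prove local constancy of $t\mapsto\phi(\gamma,t)$ via evenly covered neighborhoods and freeness of the $\Gamma'$-action, and you flag that the induced isomorphism is only canonical up to conjugation by a deck transformation, which is harmless here since $\Gamma'\cong\Z^2$ is abelian in the torus case. One simplification worth noting: since $\tilde S\times[0,1]$ is connected and simply connected, the two maps $\tilde H\circ(\gamma\times\mathrm{id})$ and $\tilde H$ are both lifts of $H\circ(\pi_S\times\mathrm{id})$ agreeing up to deck transformation, hence differ by a \emph{single} element of $\Gamma'$ on the whole product; this yields the $t$-independence of $\phi(\gamma,t)$ in one stroke and lets you skip the slice-by-slice local-constancy argument entirely.
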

We will use the above to prove the following result.

\begin{prop}
\label{extremalaffine}
The map $\psi:S\to S'$ which lifts to the unique affine map $\tilde{\psi}:\C\to\C$ described above realizes the minimal Lipschitz constant in its homotopy class.
\end{prop}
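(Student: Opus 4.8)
The plan is to reduce the problem to a statement about affine maps of $\C$ that are equivariant with respect to lattice translations, and then argue that an affine map minimizes the Lipschitz constant among all equivariant maps. First I would fix the representatives: by the identifications in Section \ref{background}, write $S = \C/\Lambda$ and $S' = \C/\Lambda'$ where $\Lambda$ is spanned by $\{v_1,v_2\}$ and $\Lambda'$ by $\{w_1,w_2\}$, with the bases chosen compatibly with the markings, so that the homotopy class $[f'^{-1}\circ f]$ is exactly the class of maps whose lift $\tilde\varphi:\C\to\C$ satisfies $\tilde\varphi(z+v_j) = \tilde\varphi(z) + w_j$ for $j=1,2$. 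The unique affine map $\tilde\psi$ is the $\R$-linear map sending $v_j \mapsto w_j$ (composed with a translation, which we may take to be trivial), and it descends to $\psi:S\to S'$. Using Proposition \ref{lehtohomotopy2}, any homeomorphism $\varphi:S\to S'$ homotopic to $\psi$ has a lift $\tilde\varphi$ inducing the same isomorphism of deck groups, hence satisfying the same equivariance relation $\tilde\varphi(z+\gamma) = \tilde\varphi(z) + A(\gamma)$ for all $\gamma\in\Lambda$, where $A = \tilde\psi$ on the lattice.

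Next I would observe that the Lipschitz constant of $\varphi:S\to S'$, computed with the flat metrics, equals the Lipschitz constant of $\tilde\varphi:\C\to\C$ with the Euclidean metric, since the covering maps are local isometries and the quotient metric is a length metric. So it suffices to show $\mathcal{L}(\tilde\varphi) \geq \mathcal{L}(\tilde\psi)$ for every $\Lambda$-equivariant homeomorphism $\tilde\varphi$. The key estimate is this: for any $\gamma\in\Lambda$ and any $z$, equivariance gives $\tilde\varphi(z+N\gamma) - \tilde\varphi(z) = N\,A(\gamma)$ for every integer $N$, so
\begin{equation}
\label{ratioest}
\mathcal{L}(\tilde\varphi) \geq \frac{|\tilde\varphi(z+N\gamma)-\tilde\varphi(z)|}{|N\gamma|} = \frac{|A(\gamma)|}{|\gamma|}.
\end{equation}
Taking the supremum over all $\gamma\in\Lambda\setminus\{0\}$, and noting that the directions $\gamma/|\gamma|$ for $\gamma\in\Lambda$ are dense in the unit circle, we get $\mathcal{L}(\tilde\varphi) \geq \sup_{|u|=1}|A(u)| = \mathcal{L}(\tilde\psi)$, where the last equality is the standard fact that the Lipschitz constant of a linear map is its operator norm, attained in the direction of the top singular vector. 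Since $\psi$ itself lies in the homotopy class, the infimum defining $\lambda$ is achieved by $\psi$, which is exactly the claim.

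I expect the main obstacle to be the density argument in \eqref{ratioest}: the estimate directly bounds $\mathcal{L}(\tilde\varphi)$ below only by $|A\gamma|/|\gamma|$ for lattice directions $\gamma$, whereas the operator norm of $A$ may be attained in an irrational direction. Resolving this requires a short continuity/approximation argument — either approximating the extremal unit vector by lattice directions and using continuity of $u\mapsto |A u|$ together with a uniform-continuity control on $\tilde\varphi$ restricted to a fundamental domain (which is compact after passing to the quotient), or, more cleanly, replacing the single step $N\gamma$ by a lattice path $\sum n_j v_j$ approximating a long segment in the extremal direction and invoking equivariance along that path. A secondary point requiring care is the verification that the flat quotient metric makes the covering map an \emph{isometry} on small balls (so Lipschitz constants upstairs and downstairs agree); this follows from the construction in Section \ref{background} but should be stated. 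Once these two points are handled, uniqueness of the affine map in its class is immediate from the fact that $A$ is determined on a basis of $\Lambda$.
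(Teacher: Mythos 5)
Your argument is correct, but it takes a genuinely different route from the paper. The paper's proof is a rescaling argument in the style of Lehto: given any $K$-Lipschitz map $g$ in the normalized class $\mathcal{F}$ of equivariant lifts, it forms $g_k(z)=g(kz)/k$, shows each $g_k$ is again $K$-Lipschitz and lies in $\mathcal{F}$, proves (Lemma \ref{uniformconv}) that $g_k$ converges uniformly to the affine map $w$, and concludes $\mathcal{L}(w)\leq K$ because a limit of $K$-Lipschitz maps is $K$-Lipschitz. You instead derive the lower bound $\mathcal{L}(\tilde\varphi)\geq |A(\gamma)|/|\gamma|$ from equivariance for each lattice vector $\gamma$, and then use density of lattice directions in the unit circle to recover the full operator norm of $A$. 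This is in effect the inequality $\lambda\geq\kappa$ combined with the observation that the marked length spectrum of the affine map already detects its Lipschitz constant --- which is exactly the density argument the paper uses later in Proposition \ref{kappalambdasame} to prove $\kappa=\lambda$. So your proof of Proposition \ref{extremalaffine} essentially anticipates Section \ref{kappametric}; it is more elementary (no uniform convergence lemma needed) and gives a direct geometric source for the lower bound, while the paper's blow-down argument produces the affine map as an explicit limit and is the approach that transfers verbatim to the quasiconformal setting. Two small remarks: the ``main obstacle'' you flag is weaker than you fear --- since $u\mapsto|A(u)|$ is continuous on the compact circle, the supremum over the dense set of lattice directions automatically equals $\lVert A\rVert_{op}$, with no uniform-continuity control on $\tilde\varphi$ required (you only ever evaluate the a priori bound at lattice directions, never $\tilde\varphi$ itself at the irrational limit). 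And your claim that Lipschitz constants upstairs and downstairs agree does deserve the one-line justification you indicate: a Lipschitz map between length spaces multiplies lengths of rectifiable paths by at most its Lipschitz constant, and the covering projections are local isometries, so the constants transfer in both directions.
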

\begin{proof}
Let $S=\C/\Lambda$ and $S'=\C/\Lambda'$ be tori of volume 1 with markings $f$ and $f'$. By Proposition \ref{lehtohomotopy1}, and the fact that conformal self-maps of $\C$ are affine, we may consider only those lifts of maps $\varphi:S\to S'$ which have the property that $\tilde{\varphi}(0)=0$. Let $\mathcal{F}$ denote the class of all such lifts whose quotients are homotopic to $f'^{-1}\circ f$. For $g\in \mathcal{F}$, let $\bar{g}$ denote the induced map $S\to S'$. 

Let $q:\C\to\C/\Lambda$ and $q':\C\to\C/\Lambda'$ be the quotient maps. Then for all $g\in \mathcal{F}$, the following diagram commutes:

\begin{figure}[!ht]
\centering
\begin{tikzcd}
\mathbb{C} \arrow[r, "g"] \arrow[d, "q"] & \mathbb{C} \arrow[d, "q'"] \\
S \arrow[r, "\bar{g}"] & S' 
\end{tikzcd}
\end{figure}

 Let $\{\omega_1,\omega_2\}$ be a basis of $\Lambda$, and let $P\subset\C$ be the parallelogram spanned by this basis; it is a fundamental domain for $\Lambda$. For any $g_1,g_2\in \mathcal{F}$, it follows that $g_1(\omega_i)=g_2(\omega_i)+\lambda_i$ for some $\lambda_i\in\Lambda$ for each of $i=1,2$ by commutativity. By Proposition \ref{lehtohomotopy2}, it follows that $\lambda=0$ since $g_1$ and $g_2$ are homotopic. One then obtains a basis $\{\zeta_1,\zeta_2\}$ of $\Lambda'$ such that $\mathcal{F}$ is the class of homeomorphisms $g:\C\to\C$ with
\begin{equation}
\label{gclass}
g(0)=0,\ g(z+m\omega_1+n\omega_2) = g(z) + m\zeta_1+n\zeta_2
\end{equation}
for all $z\in\C$. Notice that any homeomorphism $\C\to\C$ satisfying (\ref{gclass}) descends to a map $S\to S'$ homotopic to $f'^{-1}\circ f$, since the markings are respected. The condition of being affine uniquely determines such a map inside a fundamental domain of $\Lambda$, and hence on all of $\C$. This proves uniqueness of the affine map; let $w\in \mathcal{F}$ be this map.

Next, let $g\in \mathcal{F}$ be a $K$-Lipschitz map, i.e.
\begin{equation}
\label{lipschitzconstant}
K \geq \sup_{x\neq y}\frac{|g(x)-g(y)|}{|x-y|}.
\end{equation}
Define $g_k(z) = g(kz)/k$ for $k=1,2,\ldots$. Then for each $k$, it is not hard to check using (\ref{lipschitzconstant}) that $g_k$ is also $K$-Lipschitz. Further, notice that every $g_k$ satisfies (\ref{gclass}), and so $g_k\in F$ for all $k>0$. We claim that $g_k\xrightarrow{k\to\infty}w$ uniformly on $\C$. We give a proof of this claim below in Lemma \ref{uniformconv}. It is a standard fact from real analysis that the pointwise (and hence, uniform) limit of a sequence of $K$-Lipschitz functions is also $K$-Lipschitz. Hence $w$ is $K$-Lipschitz. In other words, $K\geq\mathcal{L}(w)$. Because this holds for any Lipschitz function in $\mathcal{F}$, we see that $w$ is extremal (i.e. has minimal Lipschitz constant). 
\end{proof}

The following is used without proof in \cite{lehto} for a similar proposition, but a proof is given here. 

\begin{lem}
\label{uniformconv}
In the proof of Proposition \ref{extremalaffine}, the sequence $g_k\to w$ uniformly.
\end{lem}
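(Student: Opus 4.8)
The plan is to exploit the functional equation (\ref{gclass}) directly. Both $g$ and $w$ satisfy it, so the difference $\delta(z) := g(z) - w(z)$ is $\Lambda$-periodic: $\delta(z + m\omega_1 + n\omega_2) = \delta(z)$ for all integers $m,n$. In particular $\delta$ is a continuous function on the compact torus $S = \C/\Lambda$, hence bounded; let $M := \sup_{z \in \C}|\delta(z)| = \sup_{z \in P}|\delta(z)| < \infty$, where $P$ is the fundamental parallelogram. Now observe that $g_k(z) - w(z) = \tfrac{1}{k}\big(g(kz) - w(kz)\big) = \tfrac{1}{k}\delta(kz)$, where I have used that $w$ is affine with $w(0)=0$, so $w(kz) = k\,w(z)$ and therefore $w(kz)/k = w(z)$. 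Consequently
\[
\sup_{z \in \C}|g_k(z) - w(z)| = \sup_{z \in \C}\frac{1}{k}|\delta(kz)| = \frac{M}{k} \xrightarrow{k \to \infty} 0,
\]
which is precisely uniform convergence on all of $\C$.

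The only point requiring a word of care is the claim that $w$ is genuinely affine and linear in the strong sense $w(kz) = k\,w(z)$, i.e. that $w$ is \emph{linear} (no constant term) rather than merely affine. This follows from the normalization $w(0) = 0$ imposed in the proof of Proposition \ref{extremalaffine}: an affine map $\C \to \C$ fixing the origin is $\R$-linear, so $w(kz) = k\,w(z)$ for every real scalar $k$, and in particular for the positive integers $k$ appearing in the sequence. Combined with the periodicity of $\delta$, the estimate above is then immediate, and there is no serious obstacle — the lemma is essentially a one-line consequence of periodicity once the rescaling identity $g_k - w = \tfrac1k\,\delta(k\cdot)$ is written down. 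If one wished to avoid invoking compactness of the torus, one could instead note that $\delta$ is bounded on the closed parallelogram $P$ by continuity and then extend the bound to all of $\C$ by periodicity; this gives the same constant $M$.

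Let me record the argument.

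\begin{proof}
Recall from the proof of Proposition \ref{extremalaffine} that $w$ and every $g_k$ satisfy the functional equation (\ref{gclass}), and that $w$ is the affine map with $w(0)=0$ normalized so that $w(\omega_i) = \zeta_i$. In particular $w$ is $\R$-linear, so $w(kz) = k\,w(z)$ for every positive integer $k$, whence
$$
g_k(z) - w(z) = \frac{g(kz)}{k} - w(z) = \frac{g(kz) - w(kz)}{k} = \frac{1}{k}\,\delta(kz),
$$
where $\delta := g - w$. Subtracting the two instances of (\ref{gclass}) for $g$ and for $w$ shows that $\delta(z + m\omega_1 + n\omega_2) = \delta(z)$ for all $z \in \C$ and all $m,n \in \Z$; that is, $\delta$ descends to a continuous function on the compact torus $S = \C/\Lambda$. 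Hence
$$
M := \sup_{z \in \C}|\delta(z)| = \sup_{z \in P}|\delta(z)| < \infty,
$$
$P$ being a (compact) fundamental parallelogram for $\Lambda$. Therefore
$$
\sup_{z \in \C}|g_k(z) - w(z)| = \sup_{z \in \C}\frac{|\delta(kz)|}{k} = \frac{M}{k} \longrightarrow 0
$$
as $k \to \infty$, which is exactly the assertion that $g_k \to w$ uniformly on $\C$.
\end{proof}
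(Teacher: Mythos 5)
Your proof is correct and rests on the same idea as the paper's: the functional equation (\ref{gclass}) reduces everything to a compact fundamental domain, yielding a bound of the form $M/k$. Your packaging is in fact a bit cleaner -- by observing that $\delta = g - w$ is $\Lambda$-periodic and hence a bounded continuous function on the torus, you avoid the paper's explicit splitting of $kr$ and $ks$ into integer and fractional parts -- but the underlying argument is the same.
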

\begin{proof}
Pick $\epsilon>0$ and let $z\in\C$. Since $\omega_1$ and $\omega_2$ are $\R$-linearly independent, $z$ may be written as $r\omega_1+s\omega_2$ for some $r,s\in\R$. Let 
$$
M=\sup_{(a,b)\in[0,1]^2}|g(a\omega_1 + b\omega_2)| + |\zeta_1|+|\zeta_2|,
$$
which is finite since $g$ is continuous and this domain is compact. Then for any $z=r\omega_1+s\omega_2\in\C$ and any integer $k>M/\epsilon$, we have:
\begin{equation}
\label{uniformconvergence}
 \big|g_k(r\omega_1+s\omega_2)-w(r\omega_1+s\omega_2)\big| = \frac{1}{k}\big|g(kr\omega_1+ks\omega_2) - (kr\zeta_1+ks\zeta_2)\big|
\end{equation}
since $w$ is the affine map. Write $kr=m_1+t_1$ and $ks=m_2+t_2$ where $t_i\in[0,1)$ and $m_i\in \Z$, for $i=1,2$. Then \ref{uniformconvergence} simplifies to:
$$
\frac{1}{k}|g(t_1\omega_1 + t_2\omega_2)- t_1\zeta_1 - t_2\zeta_2|\leq \frac{1}{k}\big(|g(t_1\omega_1+t_2\omega_2)|+t_1|\zeta_1|+t_2|\zeta_2|\big)\leq\frac{1}{k}M<\epsilon,
$$
because the integer part $m$ of $kr$ factors through $g$. On this domain, we have uniform convergence of $g_k$ to the affine map $w$. Since $\{\omega_1,\omega_2\}$ is a basis of $\C$, the calculation for the rest of $\C$ is similar. 
\end{proof}

It is also known that the extremal map for the Teichm\"uller distance is unique (see \cite{lehto}, Theorem 6.3). Interestingly, in contrast to the quasiconformal case, there are many extremal Lipschitz maps. The main idea in the construction is to keep the map linear in the direction of maximal stretch, but allow for variable pinching in the other directions. The construction and proof are elementary. For the square torus to a rectangular torus, we give the following construction of a 2-parameter family of extremal Lipschitz maps. In contrast to the case of the affine map, the inverses of the maps constructed in Proposition \ref{nonuniqueaffine} are not Lipschitz-extremal.

\begin{prop}
\label{nonuniqueaffine}
There exists a pair of marked flat tori with infinitely many distinct homeomorphisms respecting the markings which all realize the extremal Lipschitz constant.  
\end{prop}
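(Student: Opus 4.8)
The plan is to write the family down explicitly in the simplest nontrivial case. Identify $\C$ with $\R^2$, take $S=\C/\Z^2$ with its standard marking, fix a real number $a>1$, and let $S'=\C/(a\Z\times a^{-1}\Z)$, a rectangular torus of area $1$; under the correspondence $\mathcal{T}_1\leftrightarrow\HH^2$ these are the points $\tau=i$ and $\tau'=ia^{-2}$, so they are genuinely distinct. By Proposition \ref{extremalaffine} the extremal Lipschitz map in the class $[f'^{-1}\circ f]$ is the affine map $w(x,y)=(ax,a^{-1}y)$, with constant differential $\mathrm{diag}(a,a^{-1})$; since $a>1$ this matrix has operator norm $a$, and $w$ stretches the first coordinate by exactly $a$, so $\mathcal{L}(w)=a$. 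Hence the minimal Lipschitz constant in this homotopy class equals $a$, i.e.\ $\lambda(h,h')=\log a$.

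Next I would deform $w$ only transverse to the stretch direction. Fix a smooth $1$-periodic $p\colon\R\to\R$ with $p(0)=0$ and $\sup_{\R}|p'|=1$, say $p(y)=\sin(2\pi y)/(2\pi)$, and for $|t|<\delta$ with $\delta:=\min(a^{-1},\,a-a^{-1})>0$ set
\[
\tilde\psi_t(x,y)=\bigl(\,ax,\ a^{-1}y+t\,p(y)\,\bigr).
\]
Three checks are needed. First, \emph{$\tilde\psi_t$ is a diffeomorphism of $\R^2$:} it is $x\mapsto ax$ in the first variable and $y\mapsto\Psi_t(y):=a^{-1}y+tp(y)$ in the second, and $\Psi_t'(y)=a^{-1}+tp'(y)\in[a^{-1}-|t|,\,a^{-1}+|t|]$ is bounded away from $0$ since $|t|<a^{-1}$, so $\Psi_t$ is a smooth increasing bijection of $\R$ with smooth inverse. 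Second, \emph{$\tilde\psi_t$ respects the markings:} it fixes $0$, satisfies $\tilde\psi_t(x+1,y)=\tilde\psi_t(x,y)+(a,0)$, and by periodicity of $p$ also $\tilde\psi_t(x,y+1)=\tilde\psi_t(x,y)+(0,a^{-1})$; these are exactly the relations $(\ref{gclass})$ for the lattices $\Z^2$ and $a\Z\times a^{-1}\Z$, so $\tilde\psi_t$ descends to a homeomorphism $\bar\psi_t\colon S\to S'$ homotopic to $f'^{-1}\circ f$, as observed right after $(\ref{gclass})$. Third, \emph{the $\bar\psi_t$ are pairwise distinct:} for $t\ne t'$ the lifts already disagree at the non-lattice point $(0,\tfrac14)$, where $\tilde\psi_t(0,\tfrac14)=(0,\tfrac1{4a}+\tfrac{t}{2\pi})$, and these values are mutually incongruent modulo $a^{-1}\Z$ because they fill an interval of length $<\delta/\pi<a^{-1}$. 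This produces a continuum of distinct homeomorphisms in the marking class.

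It remains to bound their Lipschitz constants. The differential $D\tilde\psi_t(x,y)=\mathrm{diag}(a,\Psi_t'(y))$ is diagonal, so its operator norm is $\max(a,|\Psi_t'(y)|)$, and $|\Psi_t'(y)|\le a^{-1}+|t|\le a$ by the choice of $\delta$; hence $\|D\tilde\psi_t\|_{\mathrm{op}}\le a$ everywhere. A $C^1$ map on $\R^2$ is Lipschitz with constant equal to the supremum of $\|D\cdot\|_{\mathrm{op}}$ (integrate the derivative along straight segments), so $\mathcal{L}(\tilde\psi_t)\le a$, and this bound passes to the flat quotient metrics since the covering maps are local isometries; thus $\mathcal{L}(\bar\psi_t)\le a$. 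But $\bar\psi_t$ lies in $[f'^{-1}\circ f]$, whose minimal Lipschitz constant is $a$ by the first paragraph, so necessarily $\mathcal{L}(\bar\psi_t)=a$ for every $|t|<\delta$. Every $\bar\psi_t$ is therefore extremal, which proves the proposition.

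The one genuinely structural point — and the reason the construction has this shape — is a rigidity statement: if one instead tries a true shear $\tilde\psi(x,y)=(ax+\alpha(y),\,a^{-1}y+\beta(y))$, then inspecting the symmetric matrix $(D\tilde\psi)^{\top}D\tilde\psi$ (whose $(1,1)$ entry is $a^2$ and which must be $\preceq a^2 I$) forces $\alpha'\equiv 0$; so any Lipschitz-extremal perturbation of $w$ must keep the stretch direction linear and can only reparametrize the transverse direction, and the family above is essentially forced. The same computation, applied to $\Psi_t^{-1}$, also makes transparent the companion remark that for $t\ne 0$ the inverse $\bar\psi_t^{-1}$ has Lipschitz constant $1/(a^{-1}-|t|)>a$ and hence is \emph{not} extremal for $S'\to S$, where the minimum is again $a$. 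I expect this rigidity observation, rather than any of the verifications, to be the only subtle part.
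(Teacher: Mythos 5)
Your proof is correct and follows essentially the same strategy as the paper's: keep the map linear with stretch $a$ in the direction of maximal stretch, perturb only the transverse coordinate while keeping its derivative bounded by $a$, and conclude extremality because the Lipschitz constant cannot drop below the minimum $a$ established for the affine map. The only difference is cosmetic --- you use a smooth periodic perturbation where the paper uses a two-parameter piecewise-linear one --- and your closing rigidity computation and the non-extremality of the inverses nicely substantiate remarks the paper makes without proof.
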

\begin{proof}
We will give one possible construction of an infinite family; it will be evident that there are many more possibilities. Fix $r>1$. Choose $\epsilon\in(-1/2,1/2)$ and $\delta$ such that
$$
\max(0,\frac{1}{r}-\frac{r}{2}+\epsilon r)<\delta < \min(\frac{1}{r},\frac{r}{2}+\epsilon r).
$$
For any $r>1$, there are infinitely many choices of $\epsilon$ and $\delta$ which satisfy these. Let $S$ be the square $[0,1]\times[0,1]\subset\R^2$ and $T$ be the rectangle $[0,r]\times[0,1/r]$. These regions $S$ and $T$ represent fundamental domains for two flat tori. By our earlier work, we know the extremal Lipschitz map is given by $(x,y)\mapsto(rx,y/r)$ with Lipschitz constant $r$. We will find a different homeomorphism with the same Lipschitz constant. Define the map $F:S\to T$ by:
$$
F(x,y) =
\begin{cases}
\big(rx, \frac{1/r-\delta}{1/2-\epsilon} y\big) & y\leq 1/2 - \epsilon\\
\big(rx, \big(\frac{1}{r}-\delta\big) + \frac{y-(1/2-\epsilon)}{1/2+\epsilon}\delta & y\geq 1/2 - \epsilon
\end{cases}
$$
See the figure for an explanation of these values. 
\begin{figure}[htbp]
\label{nonuniquepic}
\includegraphics[width = 0.7\textwidth]{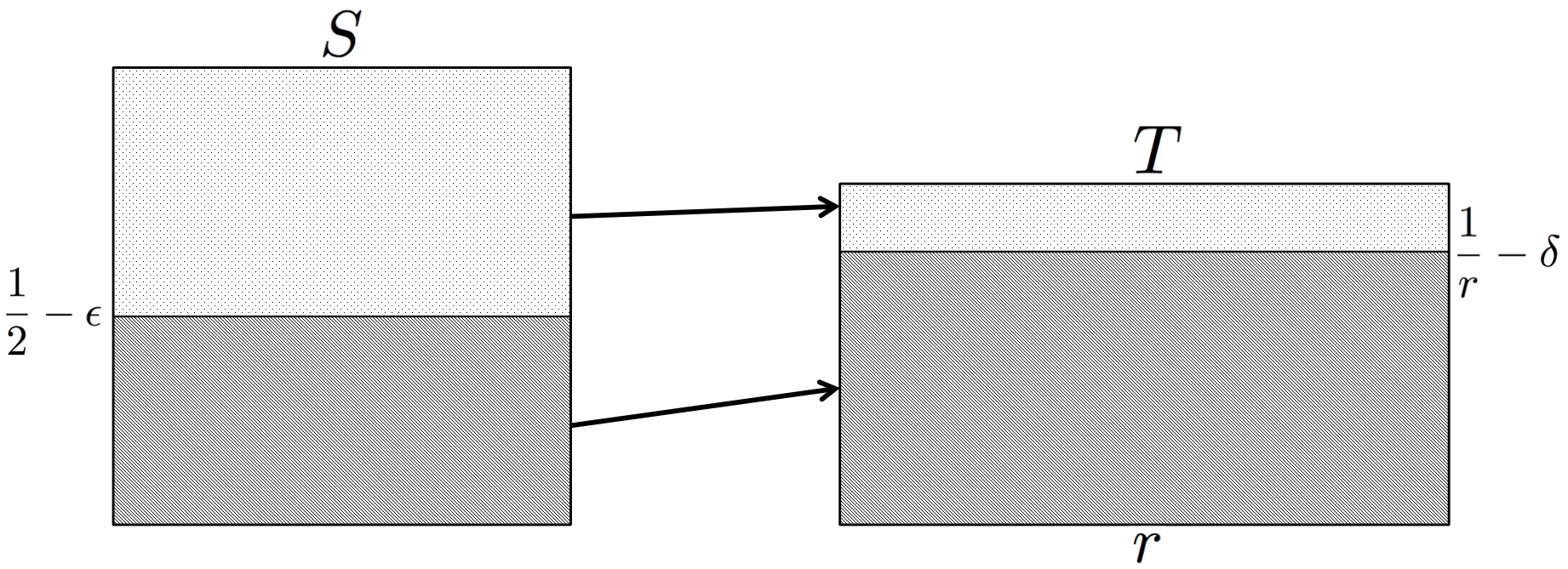}
\caption{The map $F$ sends the two portions of the square linearly to the two portions of the rectangle.}
\end{figure}

This map is linear in the $x$-direction (the direction of maximum stretch), but in the $y$-direction, it pinches less on the bottom half than on the top half (when $\epsilon>0$). The affine map occurs at $\epsilon=0$ and $\delta=1/(2r)$. As $\delta$ decreases, $F$ sends the bottom half of the square $S$ onto a larger portion of the rectangle $T$. As $\epsilon$ increases, a larger portion of the square $S$ would be mapped to the top portion of the rectangle $T$. It is clear that this map projects onto the corresponding tori since it respects the boundaries. This map is differentiable almost everywhere. The total derivatives on the top and bottom halves of the domain are respectively given by:
$$
D_{\text{bottom}} = 
\begin{pmatrix}
r & 0\\
0 & \frac{1/r-\delta}{1/2-\epsilon}
\end{pmatrix},\ 
D_{\text{top}} = 
\begin{pmatrix}
r & 0 \\
0 & \frac{\delta}{1/2+\epsilon}
\end{pmatrix}
$$
For diagonal matrices, the Lipschitz constant is simply the largest entry. With the above constraints on $\epsilon$ and $\delta$, one can check that in both $D_{\text{top}}$ and $D_{\text{bottom}}$ the larger entry is $r$, hence the Lipschitz constant for $F$ is $r$ (see the proof of Theorem \ref{main} for more on finding the Lipschitz constant of a linear map). This is the Lipschitz constant for the extremal map, so $F$ is also extremal. Varying $\epsilon$ and $\delta$, we obtain infinitely many distinct $r$-Lipschitz maps between these two tori, all in the same homotopy class. 
\end{proof}

\begin{rem}
It is straightforward to generalize the above construction to go between any two rectangular tori. Oblique tori are expected to behave similarly. However, to ensure the boundaries are treated properly, the construction will have to be different. For example, one may perturb the affine map only on a small neighborhood of a point in the interior of the parallelogram, with the change in stretch occurring only perpendicular to the direction of maximum stretch. 
\end{rem}

\begin{rem}
The maps constructed in Proposition \ref{nonuniqueaffine} have a larger quasiconformal distortion than that of the affine map (this must be the case by Teichm\"uller's uniqueness result). To see why more concretely, consider for example the case where $\epsilon>0$ and $\delta<1/(2r)$. Notice that in the top half of the domain $S$, a small circle will be stretched into an ellipse with a smaller semiminor (vertical) axis than that of the affine map but the same semimajor (horizontal) axis, giving a larger eccentricity.
\end{rem}

\section{Thurston's Lipschitz metric $\lambda$ on the Teichm\"uller space $\mathcal{T}_1$}
\label{lambdametric}

We will now define the metric $\lambda$ for the case of $\mathcal{T}_1$. In contrast to the case of hyperbolic surfaces, our definition will yield a symmetric metric. Throughout this section, we will denote by $[S,f]$ and $[S',f']$ two marked flat tori in $\mathcal{T}_1$, with $h$ and $h'$ the corresponding volume-1 flat Riemannian metrics. 

Define $\lambda$ as follows:
$$
\lambda([S,f],[S',f']) = \inf_{\varphi}(\log\mathcal{L}(\varphi))
$$
where the infimum is taken over the set of all Lipschitz maps $\varphi:S\to S'$ isotopic to $f'\circ f^{-1}$, where
$$
\mathcal{L}(\varphi) = \sup_{x\neq y}\bigg(\frac{d_{h'}(\varphi(x),\varphi(y))}{d_{h}(x,y)}\bigg).
$$

Next, using Proposition \ref{extremalaffine}, we can directly compute $\lambda$. First, by Lemma \ref{funddomain}, let $P,P'\subset\C$ be fundamental parallelograms of area 1 for $[S,f]$ and $[S',f']$ respectively, based at the origin with one side on the positive real axis. In Proposition \ref{extremalaffine}, we show that the quotient of the affine map $\C\to\C$ which sends $P\to P'$ realizes the extremal Lipschitz constant. Here we can give an explicit form for this map. Notice that between $P$ and $P'$ one may write the map as:
\begin{equation}
\label{standardform}
\varphi: x+iy\mapsto \frac{1}{d}x+cy + idy
\end{equation}
for appropriate choices of $c,d\in\R$, where $d\neq0$. We will use some basic facts from analysis to compute the Lipschitz constant for this linear map.

\begin{lem}
\label{lambdaformula}
The Lipschitz constant $\mathcal{L}(\varphi)$ of the map $\varphi$ in Equation \ref{standardform} is given by:
$$
\mathcal{L}(\varphi) = \bigg( \frac{1}{2} \big(d^2 + d^{-2} + c^2 + \sqrt{(d^2+d^{-2}+c^2)^2-4}\big)\bigg)^{\frac{1}{2}}
$$
\end{lem}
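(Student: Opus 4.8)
The plan is to reduce everything to the operator norm of a single $2\times 2$ matrix. Writing $z = x+iy$ and identifying $\C$ with $\R^2$, the linear map $\varphi$ of Equation \ref{standardform} is represented by
\[
A \;=\; \begin{pmatrix} 1/d & c \\ 0 & d \end{pmatrix},
\]
which satisfies $\det A = 1$, reflecting that $P$ and $P'$ both have area $1$. The first step is to verify that the Lipschitz constant $\mathcal{L}(\varphi)$ of the induced map between the flat quotient tori equals the Euclidean operator norm $\|A\|$. For the upper bound: given $\bar x\neq \bar y$ in $S$, pick lifts $x,y\in\C$ with $|x-y| = d_h(\bar x,\bar y)$; then $Ax$ and $Ay$ are lifts of $\varphi(\bar x)$ and $\varphi(\bar y)$, so $d_{h'}(\varphi\bar x,\varphi\bar y)\le |Ax-Ay|\le \|A\|\,d_h(\bar x,\bar y)$. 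For the lower bound: choose a unit vector $v$ with $|Av|=\|A\|$, and for $t$ smaller than both the injectivity radius of $S$ and the injectivity radius of $S'$ divided by $\|A\|$, compare $\bar 0$ with the point of $S$ represented by $tv$; both relevant distances are then realized by straight segments, and their ratio is exactly $\|A\|$. Hence $\mathcal{L}(\varphi)=\|A\|$.

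The second step is elementary linear algebra. The matrix $A^{T}A$ is symmetric and positive definite with $\det(A^{T}A)=(\det A)^2 = 1$ and $\operatorname{tr}(A^{T}A) = d^{-2}+c^2+d^2$ (the sum of the squares of the entries of $A$), so its eigenvalues $\mu_{\pm}$ satisfy $\mu_{+}\mu_{-}=1$ and $\mu_{+}+\mu_{-} = d^2+d^{-2}+c^2 =: s$. Hence they are the roots of $\mu^2 - s\mu + 1 = 0$, namely $\mu_{\pm} = \tfrac12\big(s\pm\sqrt{s^2-4}\,\big)$; note $s\ge 2$ since $d^2+d^{-2}\ge 2$, so both roots are real and positive. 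Because $\|A\|^2 = \mu_{+}$, taking square roots gives
\[
\mathcal{L}(\varphi) \;=\; \sqrt{\mu_{+}} \;=\; \left(\tfrac12\Big(d^2+d^{-2}+c^2 + \sqrt{(d^2+d^{-2}+c^2)^2-4}\Big)\right)^{1/2},
\]
which is the claimed formula.

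The computation in the second step is purely routine; the step that requires genuine attention is the first one, namely checking that passing from the lifted linear map $A$ on $\C$ to the map between the compact flat tori neither shrinks nor inflates the extremal stretch factor. This is exactly where the two defining features of the quotient metric enter: distances downstairs are infima of distances between lifts (yielding $\mathcal{L}(\varphi)\le\|A\|$), while locally the quotient is isometric to a Euclidean disc, so the supremal directional stretch $\|A\|$ is genuinely attained in the limit of nearby points (yielding $\mathcal{L}(\varphi)\ge\|A\|$).
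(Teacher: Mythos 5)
Your proof is correct and follows essentially the same route as the paper: identify $\varphi$ with the matrix $A=\begin{pmatrix}1/d & c\\ 0 & d\end{pmatrix}$, equate the Lipschitz constant with the operator norm, and extract the largest eigenvalue of the associated symmetric matrix (the paper uses $MM^{T}$, you use $A^{T}A$; these have the same spectrum, and your trace--determinant computation of the characteristic polynomial is equivalent to the paper's direct eigenvalue calculation). The one genuine addition is your first step justifying that the Lipschitz constant of the quotient map between the compact tori equals $\|A\|$ via lifts and the injectivity radius --- a point the paper asserts without proof --- which is a worthwhile piece of extra care rather than a different approach.
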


\begin{proof}
One can write the map $\varphi$ as a linear map $M:\R^2\to\R^2$:
$$
M = 
\begin{pmatrix}
\frac{1}{d} & c  \\
0 & d \\
\end{pmatrix}
$$

We will recall some standard facts from linear algebra about the matrix norm. The Lipschitz constant for $\varphi:\C\to\C$ is equal to the operator norm for the matrix $M:\R^2\to\R^2$, using the Euclidean metric:
$$
\mathcal{L}(M) := \sup_{x\neq y}\frac{||Mx - My||}{||x-y||} = \sup_z\frac{||Mz||}{||z||} =: ||M||_{op}
$$
and further recall that for diagonalizable (including symmetric) matrices, the operator norm is simply the magnitude of the largest eigenvalue. Since $MM^T$ is symmetric and $||MM^T||_{op} = ||M||_{op}^2$, the operator norm for $M$ may be computed as follows:
$$
||M||_{op}^2 = ||MM^T||_{op} = \text{max eigenvalue of } 
\begin{pmatrix}
d^{-2}+c^2 & cd \\
cd & d^2 \\
\end{pmatrix}
$$
This may be computed by elementary means, and is given by:
$$
||M||_{op}^2 = \frac{1}{2} \bigg(d^2 + d^{-2} + c^2 + \sqrt{(d^2+d^{-2}+c^2)^2-4}\bigg)
$$
from which our claim is immediate. 
\end{proof}

While our final proof that $\lambda$ is equivalent to the hyperbolic metric will not depend on this fact, we present the following argument of Thurston (adapted from the case for hyperbolic surfaces), which gives a geometric proof of the fact that $\lambda$ is positive definite on $\mathcal{T}_1$. The proof is essentially the same as that of Proposition 2.1 in \cite{thurston}. 

\begin{prop}
\label{thurstonprop}
For all pairs of marked surfaces $[S,f],[S'f']\in\mathcal{T}_1$, we have 
$$
{\lambda}([S,f],[S',f'])\geq0,
$$
with equality only if $[S,f]=[S',f']$, i.e. that $\lambda$ separates points. 
\end{prop}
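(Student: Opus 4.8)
The plan is to reduce both assertions to the explicit affine representative and then read them off Lemma \ref{lambdaformula}. By Proposition \ref{extremalaffine} the infimum defining $\lambda([S,f],[S',f'])$ is attained by the quotient of the affine map $\tilde\varphi\colon\C\to\C$ carrying $P$ to $P'$, which in the coordinates of Lemma \ref{funddomain} has the standard form \eqref{standardform} with associated matrix $M=\left(\begin{smallmatrix}1/d & c\\ 0 & d\end{smallmatrix}\right)$. Here $\det M=1$ because both tori have area $1$, and $d>0$ because $\tilde\varphi$ is orientation-preserving and carries the real-axis side and the upper-half-plane interior of $P$ to those of $P'$. Thus $\lambda([S,f],[S',f'])=\log\mathcal{L}(\varphi)$ with $\mathcal{L}(\varphi)$ given by Lemma \ref{lambdaformula}.

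First I would prove nonnegativity. Put $t=d^2+d^{-2}+c^2$. By AM--GM, $d^2+d^{-2}\ge 2$, so $t\ge 2$, and Lemma \ref{lambdaformula} reads
$$
\mathcal{L}(\varphi)^2=\tfrac12\Bigl(t+\sqrt{t^2-4}\Bigr).
$$
The right-hand side is a strictly increasing function of $t$ on $[2,\infty)$ taking the value $1$ at $t=2$, hence $\mathcal{L}(\varphi)^2\ge 1$ and $\lambda([S,f],[S',f'])\ge 0$.

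For the equality case, suppose $\lambda([S,f],[S',f'])=0$, i.e. $\mathcal{L}(\varphi)=1$. By the monotonicity just used this forces $t=2$, hence $d^2+d^{-2}=2$ and $c=0$, so (using $d>0$) we get $d=1$ and $c=0$; that is, $M$ is the identity and $\tilde\varphi=\mathrm{id}_{\C}$. Then the two fundamental parallelograms supplied by Lemma \ref{funddomain} coincide, and by the uniqueness clause of that lemma we conclude $[S,f]=[S',f']$.

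I expect the only point needing care to be the reduction to the affine representative together with the two normalizations ($\det M=1$ and $d>0$), all of which are already available from Section \ref{background} and Proposition \ref{extremalaffine}; the remaining inequalities are elementary. As an alternative closer to Thurston's original reasoning and not relying on Proposition \ref{extremalaffine} for the inequality, one observes that any Lipschitz $\varphi\colon S\to S'$ in the marking-respecting homotopy class has degree $\pm 1$, hence is surjective, so a $K$-Lipschitz such map obeys $1=\mathrm{Area}(S')\le K^2\,\mathrm{Area}(S)=K^2$, giving $\mathcal{L}(\varphi)\ge 1$ and thus $\lambda\ge 0$ directly; the discussion of when equality holds then proceeds exactly as above once Proposition \ref{extremalaffine} is invoked to identify the extremal map.
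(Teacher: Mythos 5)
Your proof is correct, but it takes a different route from the paper's. The paper gives Thurston's soft geometric argument (adapted from Proposition 2.1 of \cite{thurston}): if $\lambda\le 0$ there is a marking-respecting homeomorphism with Lipschitz constant $L\le 1$; such a map sends small disks into disks of no larger radius, and since both tori have area $1$ a covering by disjoint disks of full measure forces each disk to map onto a congruent disk, whence the map is an isometry. That argument deliberately avoids the explicit extremal map (the authors flag it as an independent, geometric proof of positive-definiteness) and is the one that generalizes to hyperbolic surfaces. You instead invoke Proposition \ref{extremalaffine} to reduce to the affine representative and read everything off Lemma \ref{lambdaformula}: with $t=d^2+d^{-2}+c^2\ge 2$ one has $\mathcal{L}(\varphi)^2=\tfrac12\bigl(t+\sqrt{t^2-4}\bigr)\ge 1$, with equality forcing $d=1$, $c=0$ and hence, by the uniqueness clause of Lemma \ref{funddomain}, $[S,f]=[S',f']$. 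This is logically sound (both ingredients are established earlier in the paper), and it buys a sharper, fully explicit equality case at the cost of depending on the full strength of the extremal-map theorem; it is essentially the computational proof the authors could have extracted from their later identification $d_{\mathrm{Teich}}=2\lambda$. Your closing alternative --- degree $\pm1$ implies surjectivity, so $1=\mathrm{Area}(S')\le K^2\,\mathrm{Area}(S)$ --- is in substance the inequality half of the paper's own argument, and is arguably cleaner than the paper's appeal to ``compactness'' for attainment of the infimum.
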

\begin{proof}
Suppose we have $[S,f],[S',f']$ such that ${\lambda}([S,f],[S',f'])\leq0$. Then by compactness there exists a homeomorphism $\varphi:S\to S'$ in the appropriate homotopy class with global Lipschitz constant $L\leq1$. 

Hence under $\varphi$ every sufficiently small disk of radius $r$ in the domain space is mapped to a subset of a disk of radius $\leq r$ in the range surface. However, both surfaces have the same area. If we cover the domain space by a disjoint union of disks of full measure, one sees that each disk must map surjectively onto a disk of the same size. This procedure works for arbitrarily small disks, and so $\varphi$ is an isometry. 
\end{proof}

From this proposition and a few quick observations, we have the following:

\begin{cor}
The function $\lambda$ is a metric on $\mathcal{T}_1$.
\end{cor}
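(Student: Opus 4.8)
The plan is to verify the three metric axioms for $\lambda$ on $\mathcal{T}_1$, drawing on the results already established. Positive definiteness—that $\lambda([S,f],[S',f'])\geq 0$ with equality exactly when $[S,f]=[S',f']$—is precisely the content of Proposition \ref{thurstonprop}, so that axiom is already in hand (modulo noting that $\lambda=0$ forces the existence of an actual isometry respecting the markings, hence the two points of $\mathcal{T}_1$ coincide). So the corollary proof really only needs to dispatch symmetry and the triangle inequality.

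For the triangle inequality, I would use the standard composition argument: given three marked tori with representatives $[S,f]$, $[S',f']$, $[S'',f'']$, any Lipschitz map $\varphi:S\to S'$ in the class $[f'\circ f^{-1}]$ and any Lipschitz map $\psi:S'\to S''$ in the class $[f''\circ f'^{-1}]$ compose to a Lipschitz map $\psi\circ\varphi:S\to S''$ in the class $[f''\circ f^{-1}]$, with $\mathcal{L}(\psi\circ\varphi)\leq \mathcal{L}(\psi)\mathcal{L}(\varphi)$ directly from the definition \eqref{lipschitz} of the Lipschitz constant. Taking logs and then infima over $\varphi$ and $\psi$ separately yields $\lambda([S,f],[S'',f''])\leq \lambda([S,f],[S',f'])+\lambda([S',f'],[S'',f''])$. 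This step is routine.

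The one genuinely new point—and the reason this is stated as a corollary rather than being immediate—is \textbf{symmetry}, which \emph{fails} in the hyperbolic setting. Here I would appeal to Proposition \ref{extremalaffine}: the extremal Lipschitz constant from $[S,f]$ to $[S',f']$ is realized by the affine map $\psi$ lifting to $\tilde\psi:\C\to\C$, whose linearization is some matrix $M\in\SL(2,\R)$ (up to rotations, by the normalization to area $1$); by Lemma \ref{lambdaformula} its Lipschitz constant is the operator norm $\|M\|_{op}$. The extremal map in the reverse direction is then the affine map with linearization $M^{-1}$, so $\lambda([S',f'],[S,f])=\log\|M^{-1}\|_{op}$. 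Now for $M\in\SL(2,\R)$ the singular values are $\sigma$ and $\sigma^{-1}$ for some $\sigma\geq 1$, so $\|M\|_{op}=\sigma=\|M^{-1}\|_{op}$; equivalently one checks directly that the formula in Lemma \ref{lambdaformula} is invariant under $(c,d)\mapsto$ the parameters of the inverse affine map, since $\det M=1$ forces the quantity $d^2+d^{-2}+c^2$ (the trace of $MM^T$) to be the same for $M$ and $M^{-1}$. Hence $\lambda([S,f],[S',f'])=\lambda([S',f'],[S,f])$.

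I expect symmetry to be the main obstacle, though it is not a deep one—it hinges entirely on the area-$1$ normalization forcing the transition matrix into $\SL(2,\R)$, where $M$ and $M^{-1}$ share singular values. This is exactly what breaks in the hyperbolic case, where no such linear-algebraic coincidence is available, and it is worth flagging this contrast explicitly in the proof. The remaining bookkeeping—that the composition of markings-respecting maps respects the composed marking, and that infima interact correctly with the product bound—is standard and I would state it tersely.
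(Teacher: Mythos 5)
Your proposal is correct and follows essentially the same route as the paper: positive definiteness via Proposition \ref{thurstonprop}, the triangle inequality via submultiplicativity of Lipschitz constants under composition, and symmetry by observing that the extremal affine map's matrix $M$ and its inverse $M^{-1}$ have the same operator norm. Your extra justification for the last step (the singular values of $M\in\SL(2,\R)$ are $\sigma$ and $\sigma^{-1}$, so $\|M\|_{op}=\|M^{-1}\|_{op}$) is exactly the linear-algebraic fact the paper's terser proof is relying on.
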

\begin{proof}
Positive-definiteness is proven in Proposition \ref{thurstonprop}. The triangle inequality is immediate from the fact that composing Lipschitz maps gives a map whose Lipschitz constant is at most the product of the Lipschitz constants of the two maps. Finally, $\lambda$ is symmetric. This can be seen by using the inverse of the matrix $M$ in Lemma \ref{lambdaformula} and noticing that the Lipschitz constant is the same for $M$ and $M^{-1}$. 
\end{proof}

We are now ready to prove equivalence of $\lambda$ with the Teichm\"uller metric.

\begin{thm}
The metric $\lambda$ is equal to the metric $d_{\text{Teich}}$ on $\mathcal{T}_1$, up to a scalar multiple.
\end{thm}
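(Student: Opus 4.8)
The plan is to compute both sides explicitly and recognize the common value as half the hyperbolic distance. Write $\tau,\tau'\in\HH^2$ for the parameters attached to $[S,f]$ and $[S',f']$ under the correspondence $\mathcal{T}_1\leftrightarrow\HH^2$, and let $P,P'\subset\C$ be the area-$1$ fundamental parallelograms from Lemma \ref{funddomain}, with spanning sides $\{1/\sqrt{\Im\tau},\,\tau/\sqrt{\Im\tau}\}$ and $\{1/\sqrt{\Im\tau'},\,\tau'/\sqrt{\Im\tau'}\}$ respectively. By Proposition \ref{extremalaffine}, $\lambda([S,f],[S',f'])=\log\mathcal{L}(\varphi)$, where $\varphi$ is the affine map $P\to P'$ respecting the markings. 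The first step is to pin $\varphi$ down in the standard form of Equation \ref{standardform}: matching $1/\sqrt{\Im\tau}\mapsto 1/\sqrt{\Im\tau'}$ and $\tau/\sqrt{\Im\tau}\mapsto\tau'/\sqrt{\Im\tau'}$ forces
\[
d=\sqrt{\Im\tau'/\Im\tau},\qquad c=\frac{\Re\tau'-\Re\tau}{\sqrt{\Im\tau\,\Im\tau'}}.
\]

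The key step is then a single trigonometric identity. Put $X=d^2+d^{-2}+c^2$. A short computation with the values above gives
\[
X=\frac{(\Im\tau')^2+(\Im\tau)^2+(\Re\tau'-\Re\tau)^2}{\Im\tau\,\Im\tau'}=2+\frac{|\tau-\tau'|^2}{\Im\tau\,\Im\tau'}=2\cosh\big(d_{\HH^2}(\tau,\tau')\big),
\]
using the standard identity $\cosh d_{\HH^2}(z_1,z_2)=1+|z_1-z_2|^2/(2\,\Im z_1\,\Im z_2)$. Since $d_{\HH^2}(\tau,\tau')\geq 0$ we get $\sqrt{X^2-4}=2\sinh d_{\HH^2}(\tau,\tau')$, and Lemma \ref{lambdaformula} yields
\[
\mathcal{L}(\varphi)^2=\tfrac12\big(X+\sqrt{X^2-4}\big)=\cosh d_{\HH^2}(\tau,\tau')+\sinh d_{\HH^2}(\tau,\tau')=e^{\,d_{\HH^2}(\tau,\tau')},
\]
so that $\lambda([S,f],[S',f'])=\log\mathcal{L}(\varphi)=\tfrac12\,d_{\HH^2}(\tau,\tau')$.

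To finish, I invoke the description of the Teichm\"uller metric recalled in Section \ref{background}: the extremal quasiconformal map in the marking class is the affine map $\tilde\psi$, and $\log K_{\tilde\psi}$ is exactly the Poincar\'e distance $h(\tau,\tau')=d_{\HH^2}(\tau,\tau')$, whence $d_{\text{Teich}}([S,f],[S',f'])=\tfrac12\log K_{\tilde\psi}=\tfrac12\,d_{\HH^2}(\tau,\tau')$. Comparing the two expressions gives $\lambda=d_{\text{Teich}}$ on $\mathcal{T}_1$ (so in particular equal up to a scalar, the scalar being $1$), and both are isometric to $(\HH^2,h)$ up to the factor $2$, completing Theorem \ref{main}.

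The only genuine obstacle is the middle identity $X=2\cosh d_{\HH^2}(\tau,\tau')$; everything else is bookkeeping with Lemma \ref{funddomain}, Proposition \ref{extremalaffine}, and Lemma \ref{lambdaformula}. Two small points deserve an explicit remark in the write-up: first, the constant in Lemma \ref{lambdaformula} is the operator norm of the linear part of $\varphi$, and this equals the Lipschitz constant of the induced torus map for the Euclidean-quotient metrics $d_h,d_{h'}$, since shortest paths on a flat torus lift to Euclidean segments and $\varphi$ is affine; second, the homothety normalizations to area $1$ do not change the marking class, so the affine map used for $\lambda$ is homotopic to the one used for $d_{\text{Teich}}$ in Section \ref{background}.
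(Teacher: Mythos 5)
Your proof is correct, and it shares the same backbone as the paper's: both arguments rest on the fact that the marked affine map is simultaneously the extremal Lipschitz map (Proposition \ref{extremalaffine}) and the extremal quasiconformal map, and both evaluate $\mathcal{L}(\varphi)$ via Lemma \ref{lambdaformula}. Where you diverge is in the final comparison. The paper never solves for $c$ and $d$ in terms of $\tau,\tau'$; it computes $|\tilde\varphi_z|$ and $|\tilde\varphi_{\bar z}|$ directly from the standard form (\ref{standardform}), observes the purely algebraic identity $K_\varphi=\mathcal{L}(\varphi)^2$ as functions of $c$ and $d$, and only afterwards brings in $\HH^2$ by quoting Proposition \ref{h2t1equiv}. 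You instead express $c$ and $d$ in terms of $\tau,\tau'$ and route both metrics through the hyperbolic distance, using the identity $d^2+d^{-2}+c^2=2\cosh d_{\HH^2}(\tau,\tau')$ to get $\mathcal{L}(\varphi)^2=e^{d_{\HH^2}(\tau,\tau')}$, and separately recalling $\log K_{\tilde\psi}=h(\tau,\tau')$ from Section \ref{background}. Your route costs one extra computation but buys the isometry with $(\HH^2,h)$ directly, so Theorem \ref{main} drops out without a separate appeal to Proposition \ref{h2t1equiv}; your two explicit remarks (operator norm equals the Lipschitz constant of the quotient map on flat tori, and homothety preserving the marking class) are exactly the right points to flag. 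One point in your favor: your conclusion $\lambda=d_{\text{Teich}}$ with scalar $1$ is the one consistent with the definition $d_{\text{Teich}}=\tfrac12\log\inf K_g$ combined with $K_\varphi=\mathcal{L}(\varphi)^2$; the paper's closing line $d_{\text{Teich}}=2\lambda$ drops that factor of $\tfrac12$ and should read $d_{\text{Teich}}=\lambda$ (the theorem as stated, ``up to a scalar multiple,'' is of course unaffected either way).
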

\begin{proof}
Let $[S,f],[S',f']\in\mathcal{T}_1$. Let $\varphi:S\to S'$ be the projection of the affine map in the homotopy class $[f'^{-1}\circ f]$. As we have seen, the map $\varphi$ realizes both the Teichm\"uller distance and the Lipschitz metric. We will compute the quasiconformal distortion of the linear map $\tilde{\varphi}:\C\to\C$, the lift of $\varphi$. 

Since $\tilde{\varphi}$ is linear we need only look at the derivatives at a point. Using the formulas $\tilde{\varphi}_z = \frac{1}{2}(\tilde{\varphi}_x-i\varphi_y)$ and $\tilde{\varphi}_{\bar{z}} = \frac{1}{2}(\tilde{\varphi}_x+i\tilde{\varphi}_y)$ and our description of $\tilde{\varphi}$ as a linear map in equation (\ref{standardform}), we obtain:
$$
|\tilde{\varphi}_z| = \frac{1}{2}\sqrt{(d+d^{-1})^2+c^2},\ |\tilde{\varphi}_{\bar{z}}| = \frac{1}{2}\sqrt{(d-d^{-1})^2+c^2}
$$
simplifying the formula for quasiconformal distortion, we obtain:
$$
K_{\tilde{\varphi}} = \frac{1}{2}\big(d^2+c^2+d^{-2}+\sqrt{(d^2+d^{-2}+c^2)^2-r}\big)
$$
and so we arrive at $K_{\varphi} = \mathcal{L}(\varphi)^2$, since the Lipschitz constant and the quasiconformal distortion will be the same for the map $\varphi$ and its lift $\tilde{\varphi}$. Because $\varphi$ realizes both extremal quasiconformal distortion and extremal Lipschitz constant, we have 
$$
d_{\text{Teich}}([S,f],[S',f']) = 2\lambda([S,f],[S',f']). 
$$
\end{proof}

We quickly arrive at Theorem \ref{main} from here. 

\begin{proof}[Proof of Theorem \ref{main}] 
Since the Teichm\"uller metric matches the hyperbolic metric under the usual identification (Proposition \ref{h2t1equiv}), and by the above theorem the metric $\lambda$ is equal to $d_{\text{Teich}}$ up to a scalar multiple, we arrive at Theorem \ref{main}. Hence, the hyperbolic plane is realized as the moduli space of marked flat tori via the extremal Lipschitz constants between them. 
\end{proof}

\section{The metric $\kappa$ on $\mathcal{T}_1$}
\label{kappametric}

Next, we define another metric, $\kappa$, on $\mathcal{T}_1$. Despite the very different definition, it will turn out to be equal to $\lambda$. Let $\mathcal{S}(\mathbb{T}^2)$ denote the set of isotopy classes of essential closed curves on the 2-torus. For $\alpha\in\mathcal{S}(\mathbb{T}^2)$ and $h$ a metric on $\mathbb{T}^2$, denote by $\ell_h(\alpha)$ the shortest length of any curve in the homotopy class $\alpha$. Recall that for the flat torus, while the curve realizing this length is not unique, the shortest length is well-defined. As above, we will denote by $[S,f]$ and $[S',f']$ two marked flat tori in $\mathcal{T}_1$, with $h$ and $h'$ the corresponding area-1 flat Riemannian metrics on $\mathbb{T}^2$. Now, $\kappa$ is defined as follows: 
\begin{equation}
\label{kappadef}
\kappa([S,f],[S',f']) = \log \sup_{\alpha\in\mathcal{S}(T^2)}\bigg(\frac{l_{h'}(\alpha)}{l_{h}(\alpha)}\bigg)
\end{equation}

That is, $\kappa$ is a measure of the maximum stretch along a geodesic in any homotopy class. Now, we will show that $\kappa=\lambda$. 

\begin{prop}
\label{kappalambdasame}
The metrics $\kappa$ and $\lambda$ are equal on $\mathcal{T}_1$.
\end{prop}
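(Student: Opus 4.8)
The plan is to prove the two inequalities $\kappa \le \lambda$ and $\lambda \le \kappa$ separately, exploiting the explicit description of extremal Lipschitz maps from Proposition \ref{extremalaffine}. For the inequality $\kappa \le \lambda$, I would take any Lipschitz map $\varphi : S \to S'$ in the homotopy class $[f'\circ f^{-1}]$ with Lipschitz constant $L = \mathcal{L}(\varphi)$. Given an essential class $\alpha \in \mathcal{S}(\mathbb{T}^2)$, let $\gamma$ be a geodesic representative on $S$ of length $\ell_h(\alpha)$. Then $\varphi(\gamma)$ is a closed curve on $S'$ in the class $\varphi_*(\alpha)$ of length at most $L\cdot\ell_h(\alpha)$, hence $\ell_{h'}(\varphi_*(\alpha)) \le L\,\ell_h(\alpha)$. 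Since $\varphi$ realizes the marking, $\varphi_*$ is the bijection on isotopy classes induced by $f'\circ f^{-1}$, so taking the supremum over $\alpha$ gives $\sup_\alpha \ell_{h'}(\alpha)/\ell_h(\alpha) \le L$; taking logs and then the infimum over $\varphi$ yields $\kappa \le \lambda$.

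For the reverse inequality $\lambda \le \kappa$, the idea is to use the affine extremal map $\varphi$ from Proposition \ref{extremalaffine} and show that its Lipschitz constant is \emph{achieved} in the direction of some simple closed curve — i.e., that the ratio $\ell_{h'}(\alpha)/\ell_h(\alpha)$ can be pushed arbitrarily close to $\mathcal{L}(\varphi)$ by a suitable family of closed geodesics. Concretely, in the standard-form coordinates of Equation \ref{standardform}, the lift $\tilde\varphi$ is a linear map $M$, and $\mathcal{L}(\varphi) = \|M\|_{op}$ is the operator norm, attained on the unit eigenvector $v$ of $MM^T$ for the top eigenvalue. A closed geodesic on the flat torus $S$ in the direction of a lattice vector $w \in \Lambda$ has length $|w|$, and its image has length $|Mw|$. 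So I need: for every $\varepsilon > 0$, a lattice vector $w \in \Lambda$ with $|Mw|/|w| > \|M\|_{op} - \varepsilon$. This follows from density: the directions of primitive lattice vectors are dense in $\mathbb{RP}^1$ (the slopes $p/q$ are dense in $\mathbb{R}\cup\{\infty\}$), and $w \mapsto |Mw|/|w|$ is a continuous function of the direction of $w$, so its values are dense in the interval between the two singular values of $M$, in particular come arbitrarily close to the maximum $\|M\|_{op}$. Hence $\sup_\alpha \ell_{h'}(\alpha)/\ell_h(\alpha) \ge \|M\|_{op} = \mathcal{L}(\varphi) = e^{\lambda([S,f],[S',f'])}$, giving $\kappa \ge \lambda$.

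Combining the two inequalities gives $\kappa = \lambda$. The main obstacle is the reverse inequality, and specifically the density argument: one must be careful that the direction realizing (or approximating) the maximal stretch of $M$ is a genuine lattice direction, not merely any real direction. This is where the fact that we are on a torus — where every direction with rational slope supports a closed geodesic, and such slopes are dense — does the work. A minor additional point to check is that the curve on $S'$ of length $|Mw|$ is actually the \emph{shortest} representative of its isotopy class (true on a flat torus: the geodesic representative of a primitive class is the corresponding straight lattice segment), so that $\ell_{h'}$ is correctly computed; this is immediate from the flat structure. Everything else is routine, using only the operator-norm computation already recorded in Lemma \ref{lambdaformula} and the structure of closed geodesics on flat tori.
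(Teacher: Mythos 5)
Your proposal is correct and follows essentially the same route as the paper: the easy direction is that a Lipschitz map stretches each closed geodesic by at most its Lipschitz constant, and the reverse direction approximates the maximal-stretch direction of the linear lift of the affine extremal map by (dense) lattice directions and concludes by continuity of the directional stretch $w\mapsto |Mw|/|w|$. The paper phrases this as a sequence of lattice points approaching the line through the origin on which the operator norm is attained, which is the same density-plus-continuity argument you give.
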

\begin{proof}
It is immediate that $\kappa([S,f],[S',f'])\leq\lambda([S,f],[S',f'])$ for all $[S,f],[S',f']\in\mathcal{T}_1$, since the latter involves a supremum over all geodesic segments rather than only closed geodesics. For the opposite inequality, we will need a more geometric argument. Pass to the universal cover of $S$, and suppose $\varphi:\C\to\C$ is the (lift of the) extremal Lipschitz map between $S$ and $S'$ in the homotopy class $[f'^{-1}\circ f]$. 

Recall that any closed geodesic on $S=\C/\Lambda$ may be represented (up to homotopy) by a line segment starting at $0\in\C$ and ending at a lattice point in $\Lambda\subset\C$. Now, given any point $p\in\C$, the Lipschitz constant of a linear map $\C\to\C$ is realized along some line containing $p$. In particular, for the map $\varphi$, there is a line $L$ containing the origin along which the value of the metric $\lambda$ is realized. If there are two lattice points on $L$, then the segment connecting them is a geodesic whose length is stretched by the same factor as the Lipschitz constant, yielding $\kappa\geq\lambda$, and we are done. 

Suppose now 0 is the only lattice point on $L$. One can find a sequence of lattice points $p_n\in\Lambda$ which approach $L$. Then by continuity, under $\varphi$ the corresponding sequence of closed geodesics will have stretch factors approaching the Lipschitz constant of the map $\varphi$. We conclude $\kappa\geq\lambda$, as required. 
\end{proof}

An alternative but less geometric approach is possible using some interesting calculations done in \cite{bpt}. Proposition \ref{kappalambdasame} follows easily from the analogous result (Theorem 4) in \cite{bpt} using a few additional calculations which convert between the different definitions of Thurston's metrics here and in that paper.

\section{Sorvali's precursor to Thurston's metric}
\label{sorvalisection}

In 1975 in \cite{sorvali}, T. Sorvali defined a metric on $\mathcal{T}_1$ before either the work of Thurston \cite{thurston} in 1986 or Belkhirat-Papadopoulos-Troyanov \cite{bpt} in 2005. We will define this metric, which Sorvali called the dilatation metric, and a related metric from \cite{bpt}. The definitions below are phrased in slightly different language from the original treatments, but they are equivalent to the original definitions and illuminate more clearly the relevant comparisons.

Let $[S,f],[S',f']\in\mathcal{T}_1$ and note that we have $f'^{-1}\circ f:S\to S'$. Here, we use the the usual identification $\mathcal{T}_1$ with $\HH^2$ to obtain a choice of Riemannian metrics $h$, $h'$ on the Riemann surfaces different from the choice we made in earlier sections. The volume is not fixed, but we use lattices of the form $\Z+\zeta\Z$ where one generator is fixed to be $1\in\C$. Equivalently, the choice of normalization here fixes the length of the shortest curve to 1. This is the same normalization used in \cite{bpt}. Define $\delta([S,f],[S',f'])$ as:
$$
\delta([S,f],[S',f']) = \inf\big\{a\geq1: \frac{1}{a} \leq \sup_{\gamma}\frac{\ell_{h'}(f'\circ f^{-1}\circ \gamma)}{\ell_h(\gamma)} \leq a\big\}
$$
where $\gamma$ ranges over all closed curves in $S$. Then the \emph{dilatation metric} $d$ is defined as
$$
d([S,f],[S',f'])=\log\delta([S,f],[S',f']).
$$
One can check that this is a metric on $\mathcal{T}_1$. 

Next, we will define a similar metric from \cite{bpt}, which we will denote by $\kappa'$. Note that in the original work it was denoted by $\kappa$, but we use $\kappa'$ to avoid confusion. Write $g=f'^{-1}\circ f$; this map essentially switches the markings. Now, $\kappa'$ is defined as:
\begin{equation} \label{bptmetric}
\kappa'([S,f],[S',f']) = \sup_{\gamma}\log\bigg(\frac{\ell_{h'}(g\circ\gamma)}{\ell_h(\gamma)}\bigg)
\end{equation}
where the supremum is again taken over all closed paths on $S$. 

We may rewrite $\delta$ as follows:
$$
\delta([S,f],[S',f']) = \max\bigg(\sup_{\gamma} \frac{\ell_{h'}(g\circ \gamma)}{\ell_h(\gamma)} , \sup_{\gamma} \frac{\ell_h(\gamma)}{\ell_{h'}(g\circ\gamma)} \bigg) = \max\bigg(\sup_{\gamma} \frac{\ell_{h'}(g\circ \gamma)}{\ell_h(\gamma)} , \sup_{\gamma} \frac{\ell_h(g^{-1}\circ\gamma)}{\ell_{h'}(\gamma)} \bigg) 
$$
From this the following observation is clear:
$$
d([S,f],[S',f']) = \max\big(\kappa'([S,f],[S',f']),\kappa'([S',f'],[S,f])\big)
$$
and so the dilatation metric is a certain symmetrization of $\kappa'$. 

One of the main results from \cite{bpt} is the following:
\begin{prop}
\label{kappasymm}
The symmetrization
$$
S\kappa'([S,f],[S',f']) = \frac{1}{2}\big(\kappa'([S,f],[S',f'])+\kappa'([S',f'],[S,f])\big)
$$
of the $\kappa'$ metric coincides with the Teichm\"uller metric on $\mathcal{T}_1$, and hence the hyperbolic metric on $\HH^2$ under the standard identification.
\end{prop}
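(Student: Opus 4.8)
The plan is to reduce Proposition~\ref{kappasymm} to the identity $\kappa=\lambda$ of Proposition~\ref{kappalambdasame} by comparing the two normalizations of the flat metrics in play. Fix representatives and use the standard identification $\mathcal{T}_1\leftrightarrow\HH^2$ to write $[S,f]\leftrightarrow\tau$ and $[S',f']\leftrightarrow\tau'$. Every essential closed curve on a flat torus has a geodesic representative that is a straight segment from the origin to a lattice point, so a free homotopy class is recorded, with respect to the marked basis, by a pair $(m,n)\in\Z^2\setminus\{0\}$, with geodesic length $|m\omega_1+n\omega_2|$; since $g=f'^{-1}\circ f$ respects the markings it carries the class $(m,n)$ on $S$ to the class $(m,n)$ on $S'$. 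Hence, with the normalization used for $\kappa'$ (lattices $\Z+\tau\Z$ and $\Z+\tau'\Z$, one generator fixed to $1$),
\[
\kappa'([S,f],[S',f'])=\log\sup_{(m,n)\neq 0}\frac{|m+n\tau'|}{|m+n\tau|},
\]
whereas the area-$1$ normalization of Section~\ref{kappametric} (lattices $\tfrac{1}{\sqrt{\Im\tau}}(\Z+\tau\Z)$ and $\tfrac{1}{\sqrt{\Im\tau'}}(\Z+\tau'\Z)$) scales each length ratio by $\sqrt{\Im\tau/\Im\tau'}$, giving
\[
\kappa([S,f],[S',f'])=\kappa'([S,f],[S',f'])+\tfrac12\log\frac{\Im\tau}{\Im\tau'}.
\]

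The correction term $\tfrac12\log(\Im\tau/\Im\tau')$ is odd under the swap $\tau\leftrightarrow\tau'$, so it cancels in the symmetrization:
\[
S\kappa'([S,f],[S',f'])=\tfrac12\big(\kappa([S,f],[S',f'])+\kappa([S',f'],[S,f])\big).
\]
By Proposition~\ref{kappalambdasame}, $\kappa=\lambda$, and $\lambda$ is a genuine (symmetric) metric, so $\kappa([S,f],[S',f'])=\kappa([S',f'],[S,f])$ and the right-hand side collapses to $\lambda([S,f],[S',f'])$. Invoking Theorem~\ref{main}, $\lambda$ coincides with $d_{\text{Teich}}$ and with the Poincar\'e metric on $\HH^2$ up to the fixed scalar appearing throughout, which is exactly the assertion.

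I expect no essential obstacle here; the work is bookkeeping with the two normalizations, the one delicate point being the identification of geodesic lengths of free homotopy classes on a flat torus with the Euclidean norms $|m+n\tau|$ of the corresponding lattice vectors, so that $\kappa$ and $\kappa'$ become literally suprema of the same ratios, together with getting the sign of the correction term right. If one prefers not to cite $\kappa=\lambda$, one can argue directly: writing $|m+n\tau|^2=v^T B v$ with $v=(m,n)$ and $B\in\SL(2,\R)$ the area-$1$ Gram matrix of the lattice, the supremum above is the top eigenvalue of $B^{-1}B'$ (a generalized Rayleigh quotient, attained over $\R^2$ and approached over $\Z^2$ by density of rational directions); since $\det(B^{-1}B')=1$ its top eigenvalue equals that of $B'^{-1}B=(B^{-1}B')^{-1}$, which re-proves symmetry, and the common value is precisely the squared operator norm of the extremal affine map of Lemma~\ref{lambdaformula}. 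Comparing with the quasiconformal computation of Section~\ref{lambdametric} then recovers $d_{\text{Teich}}$ up to scale.
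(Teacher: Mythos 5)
Your argument is correct, but it takes a genuinely different route from the paper: the paper offers no proof of Proposition \ref{kappasymm} at all, simply quoting it as Theorem 3 of \cite{bpt}. You instead derive it internally from the paper's own results. The key observation is that on a flat torus every essential free homotopy class is a lattice class $(m,n)$, preserved by the marking-compatible map $g=f'^{-1}\circ f$, so that $\kappa$ and $\kappa'$ are suprema of the very same ratios $|m+n\tau'|/|m+n\tau|$ up to the normalization factor $\sqrt{\Im\tau/\Im\tau'}$; the resulting correction $\tfrac12\log(\Im\tau/\Im\tau')$ is antisymmetric in $(\tau,\tau')$ and cancels under symmetrization, leaving $S\kappa'=\tfrac12(\kappa(x,y)+\kappa(y,x))=\lambda$ by Proposition \ref{kappalambdasame} and the symmetry of $\lambda$, whence the Teichm\"uller metric by Theorem \ref{main}. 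This runs the conversion alluded to after Proposition \ref{kappalambdasame} in the opposite direction (the authors note their $\kappa=\lambda$ could be deduced from \cite{bpt}; you show the quoted BPT theorem follows from their results), which makes the citation self-contained --- a real gain. Two bookkeeping points are worth making explicit. First, in (\ref{bptmetric}) the quantity $\ell$ must be read as the geodesic length of the free homotopy class (as everywhere else in the paper); otherwise the supremum over ``all closed paths'' would not reduce to lattice vectors, and your identification of $\kappa'$ with $\log\sup|m+n\tau'|/|m+n\tau|$ would need justification. Second, on the scalar you hedge about: since $K_{\varphi}=\mathcal{L}(\varphi)^2$, one has $d_{\text{Teich}}=\tfrac12\log K_{\varphi}=\log\mathcal{L}(\varphi)=\lambda$ (the displayed $d_{\text{Teich}}=2\lambda$ in Section \ref{lambdametric} is a slip against the line preceding it), so your chain in fact yields $S\kappa'=d_{\text{Teich}}$ on the nose, exactly as the proposition asserts. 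Your alternative Rayleigh-quotient argument --- the supremum is the top eigenvalue of $B^{-1}B'$, which equals that of its inverse because the determinant is $1$ --- is also sound and gives an independent re-proof of the symmetry.
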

See Theorem 3 in \cite{bpt} for more details. Because $\kappa'$ is not symmetric, the two symmetrizations above are not equivalent, so the dilatation metric is not equivalent to the Teichm\"uller metric. Finally, we recall the following result of Sorvali \cite[\S2, Theorem 2]{sorvali}:
\begin{prop}
The following inequality is sharp for the dilatation metric $d$ and the Teichm\"uller metric $d_{Teich}$:
$$
d([S,f],[S',f'])\leq d_{Teich}([S,f],[S',f'])\leq2d([S,f],[S',f'])
$$
\end{prop}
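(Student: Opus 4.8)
The plan is to route both quantities through the asymmetric functional $\kappa'$ and then reduce the estimate to an elementary inequality between the maximum and the average of two nonnegative numbers. Fix $[S,f],[S',f']\in\mathcal{T}_1$ and write $a=\kappa'([S,f],[S',f'])$ and $b=\kappa'([S',f'],[S,f])$. The rewriting of $\delta$ recorded above gives immediately $d([S,f],[S',f'])=\max(a,b)$, and Proposition \ref{kappasymm} identifies $d_{Teich}([S,f],[S',f'])$ with the symmetrization $\tfrac12(a+b)$. Moreover $a,b\ge 0$: with the systole of each torus normalized to $1$, if $\gamma_0$ realizes the systole of $S$ then $\ell_{h'}(g\circ\gamma_0)\ge 1=\ell_h(\gamma_0)$, so the supremum defining $a$ is at least $1$, and the same argument with $S$ and $S'$ interchanged bounds $b$.

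Granting $a,b\ge 0$, the two-sided estimate is the chain $\tfrac12\max(a,b)\le\tfrac{a+b}{2}\le\max(a,b)$ — the right step holding for any reals, the left step because $\min(a,b)\ge 0$ — which, after matching the normalization of the dilatation metric $d$ with that of $\kappa'$, is the asserted comparison with the constants $1$ and $2$. The chain is an equality at one end precisely when $\min(a,b)=0$ and at the other precisely when $a=b$, so sharpness reduces to exhibiting pairs of tori realizing each of these two conditions. For $\min(a,b)=0$ I would use the rectangular tori $S=\C/(\Z+i\Z)$ and $S'=\C/(\Z+ti\Z)$ with $t>1$: the marking-respecting map carries the class $(m,n)$ on $S$ to $(m,n)$ on $S'$, so the two suprema are $\sup_{(m,n)\ne 0}\sqrt{(m^2+t^2n^2)/(m^2+n^2)}=t$ and $\sup_{(m,n)\ne 0}\sqrt{(m^2+n^2)/(m^2+t^2n^2)}=1$, whence $b=0$. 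For $a=b$ I would use a conjugate pair $S=\C/(\Z+\tau\Z)$ and $S'=\C/(\Z-\overline{\tau}\,\Z)$ with $\Re\tau\ne 0$: these are distinct points of $\mathcal{T}_1$, and the substitution $n\mapsto -n$ together with $|\bar z|=|z|$ shows that the supremum defining $a$ coincides with the one defining $b$, so $a=b>0$.

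The step I expect to be the main obstacle is not the inequality itself — which is formal once Proposition \ref{kappasymm} and the rewriting of $\delta$ are available — but the normalization bookkeeping: one must reconcile Sorvali's convention for the dilatation metric with the convention for $\kappa'$ used in Proposition \ref{kappasymm} carefully enough that the two constants land as exactly $1$ and $2$. Concretely, the literal form $d\le d_{Teich}\le 2d$ matches the choice $d=\tfrac12\log\delta$, under which $d=\tfrac12\max(a,b)$; with $d=\log\delta$ one obtains instead the equivalent $d_{Teich}\le d\le 2d_{Teich}$. One should also confirm that the conjugate pair above genuinely represents two distinct points of Teichm\"uller space with $a=b$; the rest is routine.
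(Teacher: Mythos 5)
The paper does not actually prove this statement --- it is quoted from Sorvali \cite[\S2, Theorem 2]{sorvali} without argument --- so there is no internal proof to compare against; your derivation is a self-contained reconstruction from the paper's own ingredients, and it is essentially correct. The route is the natural one: the displayed rewriting of $\delta$ gives $d=\max(a,b)$ with $a=\kappa'([S,f],[S',f'])$ and $b=\kappa'([S',f'],[S,f])$, Proposition \ref{kappasymm} gives $d_{Teich}=\tfrac12(a+b)$, and the two-sided bound is the max-versus-average inequality for nonnegative $a,b$. Your sharpness examples both check out: for $S=\C/(\Z+i\Z)$, $S'=\C/(\Z+ti\Z)$ one indeed gets $a=\log t$, $b=0$; and for the conjugate pair $\tau$, $-\bar\tau$ with $\Re\tau\neq 0$ the substitution $n\mapsto-n$ does give $a=b>0$, and the two points of $\mathcal{T}_1\cong\HH^2$ are genuinely distinct since no quotient by $\mathrm{SL}(2,\Z)$ is taken. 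Most importantly, the normalization problem you flag is real and is the only serious issue with the statement: with the paper's literal conventions ($d=\log\delta$ and $d_{Teich}=\tfrac12\log K$, so that $d_{Teich}=\tfrac12(a+b)$ by Proposition \ref{kappasymm}), one obtains $d_{Teich}\le d\le 2d_{Teich}$, and the first inequality of the proposition as printed fails already for the rectangular pair above, where $d=\log t>\tfrac12\log t=d_{Teich}$. The stated form is recovered exactly under Sorvali's convention $d_{Teich}=\log K$ (equivalently, as you note, by halving $d$), and your examples then realize equality at each end.

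Two small refinements. First, your justification that $a,b\ge 0$ via ``the systole is normalized to $1$'' is slightly off: for a lattice $\Z+\zeta\Z$ with $\zeta$ outside the standard fundamental domain the systole can be less than $1$, so the systole of $S$ need not map to a curve of length at least its own length. The clean argument is that both lattices contain the fixed generator $1$, representing corresponding homology classes under the marking, so the class $(1,0)$ has length exactly $1$ on both sides and forces both suprema to be at least $1$. Second, you occasionally conflate $a$ with $e^{a}$ (``the supremum defining $a$ is at least $1$'' should read ``\dots so $a\ge 0$''); this is cosmetic. With those adjustments, and with the factor-of-two convention made explicit, the proof is complete.
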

Thus, in a sense, the $\kappa'$ metric measures the maximum amount one would have to stretch any closed curve when transforming $[S,f]$ to $[S',f']$, while the dilatation metric d measures the maximum required stretch going forwards or backwards. In light of Proposition \ref{kappasymm}, one may interpret the Teichm\"uller metric as measuring the average of the maximum stretch in either direction. The $\kappa'$ metric contains the most information in the sense that given the values of $\kappa'$, one may compute both the dilatation metric $d$ and the Teichm\"uller metric $d_{\text{Teich}}$, but the converse does not hold for either case. As we have seen, matching the hyperbolic metric seems to be a theme for a priori different but ``correct" metrics on $\mathcal{T}_1$. The metrics resulting from the normalization used for $d$ and $\kappa'$, which fixes the shortest curve to length 1, does not yield this desired equivalence.

\end{document}